\title[Grassmann spaces over symplectic copolar spaces]{%
Projective symplectic geometry on regular subspaces; %
Grassmann spaces over symplectic copolar spaces}
\author{M. Pra{\.z}mowska, K. Pra{\.z}mowski, M. {\.Z}ynel}
\def\dzieli{\mathrel{|}}
\def\ndzieli{\mathrel{\not|}}
\def\LineOn(#1,#2){\overline{{#1},{#2}\rule{0em}{1,5ex}}}
\def\collin{\sim}
\def\wspolin{\mbox{\boldmath$L$}}
\def\oadjac{\mathrel{\mbox{$\collin\mkern-18mu{\raise0.8ex\hbox{$\perp$}}$}}}
\def\agen#1#2{{\mbox{{\boldmath$[\mkern-4.5mu|$}}{#1}\mbox{{\boldmath$|\mkern-4.5mu]$}}_{{#2}}}}
\def\adjac{\mathrel{\sim}}
\def\nadjac{\mathrel{\not\adjac}}
\def\toadjac{\mathrel{\lower.35ex\hbox{\baselineskip-3pt\lineskip-3pt\vbox{\hbox{$\sim$}\hbox{$\sim$}}}}}
\let\doadjac\toadjac
\def\soadjac{\lower.25ex\hbox{\scriptsize\baselineskip-2.2pt\lineskip-2.2pt\vbox{\hbox{$\sim$}\hbox{$\sim$}}}}
\def\ssoadjac{\lower.21ex\hbox{\tiny\baselineskip-2.2pt\lineskip-2.2pt\vbox{\hbox{$\sim$}\hbox{$\sim$}}}}
\def\badjac{\mathchoice{\doadjac}{\toadjac}{\soadjac}{\ssoadjac}}
\def\aftoadjac{\mathrel{\lower.35ex\hbox{\baselineskip-3pt\lineskip-3pt\vbox{\hbox{$\sim$}\hbox{$\simeq$}}}}}
\let\afdoadjac\aftoadjac
\def\afsoadjac{\lower.25ex\hbox{\scriptsize\baselineskip-2.2pt\lineskip-2.2pt\vbox{\hbox{$\sim$}\hbox{$\simeq$}}}}
\def\afssoadjac{\lower.21ex\hbox{\tiny\baselineskip-2.2pt\lineskip-2.2pt\vbox{\hbox{$\sim$}\hbox{$\simeq$}}}}
\def\afbadjac{\mathchoice{\afdoadjac}{\aftoadjac}{\afsoadjac}{\afssoadjac}}
\def\blisk{\mathrel{\wedge}}
\def\bliskx{\mathrel{\hbox{\baselineskip-5pt\lineskip-3pt\vbox{\hbox{\footnotesize$\blisk$}\hbox{\footnotesize$\blisk$}}}}}
\def\blisq{\mathrel{\hbox{\baselineskip-5pt\lineskip-3pt\vbox{\hbox{\footnotesize$\barwedge$}\hbox{\footnotesize$\blisk$}}}}}
\def\penc{{\bf p}}
\def\benc{{\bf b}}
\def\peki{{\cal P}}
\def\afinmark{\propto}
\def\apeki{{\peki}^{\afinmark}}
\def\pek(#1,#2){\penc({#1},{#2})}
\def\bek(#1,#2){\benc({#1},{#2})}
\def\pekx(#1,#2){\penc^{\ast}({#1},{#2})}
\def\apek(#1,#2){\penc^{\afinmark}({#1},{#2})}
\def\abek(#1,#2){\benc^{\afinmark}({#1},{#2})}
\def\apekx(#1,#2){\penc^{\ast\afinmark}({#1},{#2})}
\def\rpek(#1,#2){\penc^{\goth r}({#1},{#2})}
\let\topadjac\upadjac
\let\botadjac\downadjac
\def\stars{{\cal S}}
\def\tops{{\cal T}}
\def\topsx{{\cal T}^\ast}
\def\starof{\mathrm{S}}
\def\topof{\mathrm{T}}
\def\astarof{\starof^{\afinmark}}
\def\atopof{\topof^{\afinmark}}
\def\Quadr{{\mathbf{Q}}}
\def\regsuby{{\mathbf{R}}}
\def\tangsuby{{\mathbf{T}}}
\def\symsuby{{(\mathbf{T\text{-}R})}}
\def\AffineSpSymb{\mathbf{A}}
\def\AfSpace(#1){\ensuremath{\AffineSpSymb(#1)}}
\def\KwadrSpSymb{\mathbf{Q}}
\def\KwadrSpace(#1,#2){\ensuremath{\KwadrSpSymb_{#1}(#2)}}
\def\AfPolSpSymb{\mathbf{U}}
\def\AfpolSpace(#1,#2){\ensuremath{\AfPolSpSymb_{#1}(#2)}}
\def\AfpolSpacex(#1,#2){\ensuremath{\AfPolSpSymb^\dagger_{#1}(#2)}}
\def\PencilSp(#1,#2){%
\def\tempa{#2}
\def\tempb{\symsuby}
\ifx\tempa\tempb
{\bf P}_{#1}{#2}
\else
{\bf P}_{#1}({#2})
\fi}
\def\PencilSpx(#1,#2){{\bf P}_{#1}^\dagger({#2})}
\def\PencSpace(#1,#2){\PencilSp({#2},{#1})}
\def\PencSpacex(#1,#2){\PencilSpx({#2},{#1})}
\def\Rad{{\mathrm{Rad}}}
\def\rdim{{\mathrm{rdim}}}
\def\fixproj{\ensuremath{\goth P}}
\def\fixprojr{\ensuremath{\goth R}}
\newenvironment{ctext}{%
  \par
  \smallskip
  \centering
}{%
 \par
 \smallskip
 \csname @endpetrue\endcsname
}
\newcounter{sentencex}
\def\thesentencex{\Alph{sentencex}}
\def\labelsentencex{\upshape(\thesentencex)}
\newenvironment{sentencesx}{%
   \list{\labelsentencex}
      {\usecounter{sentencex}\def\makelabel##1{\hss\llap{##1}}
        \topsep3pt\leftmargin0pt\itemindent40pt\labelsep8pt}%
  }{%
    \endlist}
\newenvironment{efekt}{%
\refstepcounter{equation}
  \list{{(\theequation)}}
      {\topsep3pt\settowidth{\labelwidth}{(\theequation)}%
         \labelsep\leftmargin\addtolength{\labelsep}{-\labelwidth}%
           \itemindent0pt}%
  \item\em  
}{%
  \endlist}
\def\efekta#1{%
\begin{equation}\begin{minipage}[m]{0.9\textwidth}\em #1\end{minipage}\end{equation}}
\newenvironment{uwaga}[1]{\color{blue}{\sc #1}:\/\color{red}}{\color{black}}
\begin{document}

\maketitle

\begin{abstract}
  We construct Grassmann spaces associated with the incidence
  geometry of regular and tangential subspaces of a 
  symplectic copolar space, 
  show that the underlying metric projective
  space can be recovered in terms of the corresponding adjacencies
  on so distinguished family of $k$-subspaces 
  ($2k+1 \neq$ geometrical dimension of the space),
  and thus we prove that bijections which preserve the adjacency are
  determined by automorphisms of the underlying space.

  \smallskip
  \par\noindent
  Mathematics Subject Classification (2000): 51A50, 51F20.
  \par\noindent
  Key words:
  symplectic copolar space, Grassmann space, adjacency.
\end{abstract}

\section*{Introduction}

It is a classical result that the projective geometry can be recovered from
its associated Grassmann space (cf. \cite{bichtalin}, \cite{talin}, \cite{pamb3})
and adjacency preserving bijections of projective $k$-subspaces 
($2k+1 \neq$ the dimension of the space) 
are determined by collineations (cf. e.g. \cite{diedon}).
How to transfer these notions to metric projective geometry
to obtain reasonable results?

Several distinct ways lead to problems investigated in this paper.
Let us consider a metric projective geometry i.e. a projective
space \fixproj \ equipped with a nondegenerate polarity $\varkappa$.
In a more specific sense, let us consider a vector space $\field V$
(which represents \fixproj) equipped with a nondegenerate bilinear
reflexive form $\xi$ (which determines $\varkappa$ and the conjugacy $\perp$).
We also assume that the coordinate field of $\field V$ has characteristic
$\neq 2$.
If $\ind(\xi) > 1$ then the geometry of $(\fixproj,\perp)$
can be expressed in terms of the associated polar space,
whose points are the isotropic (singular) points of \fixproj \ 
and whose lines are
the isotropic lines (cf. \cite{veldkamp}, \cite{bushult}). 
This geometry can be also expressed in terms of
the adjacency: binary collinearity of points.
This result can be extended to isotropic subspaces of higher dimensions, and on 
isotropic subspaces Grassmann spaces can be defined quite naturally (cf. \cite{polargras}).
Analogous results remain valid for affine polar spaces
(``polar spaces" associated with metric affine geometries, 
cf. \cite{cohenshult}, \cite{afpolar}).
But within $(\fixproj,\perp)$ isotropic subspaces are ``most" degenerate.
``Least" degenerate and ``opposite" to  isotropic are 
regular (radical-free) subspaces, which are more
suitable to develop the geometry, especially when reflections are considered
(cf. \cite{reflgeom1}).

If $\ind(\xi)\neq 0$ and $\xi$ is symmetric then the structure of regular points
and regular lines is equivalent to the underlying metric projective space.
One can also extend this result to regular subspaces of higher dimensions
defining associated Grassmann spaces of regulars subspaces.
The underlying metric projective geometry can be expressed in terms of
such Grassmannians (cf. \cite{grasregul}).
However, if $\xi$ is 
symplectic (anti-symmetric or skew-symmetric in other words)
then no point is regular and the above, so elegant approach fails.
Instead, one can consider symplectic copolar space  
(cf. \cite{copolar}, \cite{embcopol}, 
also called hyperbolic symplectic space, cf. \cite{gramlich})
with the isotropic points and the regular lines.
How to extend this approach to higher dimensions?

Secondly, if $\xi$ is symmetric then quite interesting geometry arises
when we consider the structure of regular points and so called tangential lines
(lines, which contain exactly one nonregular point, cf. \cite{tangi}).
If $\xi$ is symplectic, then such a structure has no sense, but instead,
we can consider the structure with regular lines and tangential planes.
Here, a tangential subspace is defined as a subspace, whose radical is a point.

All of that suggests that extending all the machinery of adjacency and 
Grassmann spaces to symplectic copolar spaces one should investigate 
``best possible": regular subspaces of even (linearly computed) dimensions
and tangential subspaces of odd dimensions.
And indeed, as we prove in this paper, classical results concerning geometry
of Grassmannians remain valid: the underlying metric projective geometry
can be expressed in terms of Grassmann spaces of such a family of 
subspaces, and an analogue of Chow Theorem (cf. \cite{chow}, \cite{diedon}) 
holds i.e.
bijections which preserve the adjacency are determined by
automorphisms of the underlying metric projective space.

\section{Notions, results}\label{sec:notions}

Let $\field V$ be a vector space with the field of scalars of $\mathrm{char}\neq 2$,
let a nondegenerate symplectic form $\xi$ be defined on $\field V$,
and let $n = \dim({\field V})$.
Then $n = 2m$, where $m = \ind(\xi)$.

Let $\Sub({\field V})$ ($\Sub_k({\field V})$, resp.) stand for 
all the (all the $k$-dimensional) subspaces of $\field V$.
For any ${\mathscr H} \subset \Sub({\field V})$ and integer $k$ we write
${\mathscr H}_k = {\mathscr H}\cap \Sub_{k}({\field V})$.

\par\noindent
The structure
\begin{ctext}
  $\fixproj := \PencSpace({\field V},1) := 
  \struct{\Sub_1({\field V}),\Sub_2({\field V}),\subset}$
\end{ctext}
is the projective space over $\field V$.
In what follows we shall refer mostly to linear dimension, 
so a point of \fixproj \ has dimension $1$,
a line has dimension $2$ and so on.
The orthogonality $\perp$ determined by $\xi$
is defined by the condition
\begin{ctext}
  $U_1\perp U_1 \iff \xi(U_1,U_2)=0$, for $U_1,U_2 \in \Sub({\field V})$.
\end{ctext}
For $u,v \in V$ and $U\in\Sub({\field V})$
we write $u\perp U$ if $\gen{u}\perp U$ and $u \perp v$ if $\gen{u}\perp\gen{v}$
i.e. if $\xi(u,v)=0$.
Set $U^\perp := \left\{ u\in V\colon U\perp u \right\}$
and then $\Rad(U) := U \cap U^\perp$.
Write $\rdim(U) = \dim(\Rad(U))$.

The structure $(\fixproj,\perp)$ is called 
{\em a symplectic (metric-)projective space}.

Let $U\in\Sub(\field V)$.
The subspace $U$  is {\em isotropic} 
(the terms {\em totally isotropic} and {\em singular} are also used) if $U \perp U$;
if $U$ is isotropic then $\dim(U)\leq m$.
The subspace $U$ is {\em regular} iff $\Rad(U)$ is the zero-subspace 
of $\field V$.
In other words, $U$ is regular if the form $\xi\restriction{U}$ is nondegenerate.
Let $\Quadr$ stand for the class of isotropic subspaces of $\field V$
and 
let $\regsuby$ stand for the class of regular subspaces of $\field V$.
Since $\xi$ is symplectic, $\regsuby_k\neq\emptyset$ yields $2\dzieli k$.
In particular, $\regsuby_1 = \emptyset$ and $\Quadr_1 = \Sub_1({\field V})$.
Moreover, $\Sub_2(\field V) = \Quadr_2 \cup \regsuby_2$;
i.e. a line of \fixproj \ is either regular or isotropic.

A subspace $U$ of $\field V$ will be called 
{\em tangential}  (cf. \cite{tangi})
if $\Rad(U)$ is a projective point i.e. if $\rdim(U) = 1$.
Let $\tangsuby$ be the class of the tangential subspaces of $\field V$.
It is evident that $\tangsuby_k\neq\emptyset$ yields 
$2 \ndzieli k$.

The following evident observation is worth to note
\begin{fact}\label{fct:postac:tangi}
  Let $U\in\Sub({\field V})$. The following conditions are equivalent:
  \begin{enumerate}[\rm(i)]\itemsep-2pt
  \item
    $U \in\tangsuby$;
  \item
    there are $U_0\in\regsuby$ and a point $p\subset U_0^\perp$ such that
    $U = U_0 + p$.
  \end{enumerate}
\end{fact}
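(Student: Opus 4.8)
The plan is to prove the two implications separately, in each case taking $p = \Rad(U)$ as the distinguished point and letting $U_0$ be a vector-space complement of $p$ inside $U$. Two preliminary remarks will be used throughout. First, $\Rad(U)$ is exactly the radical of the restricted form $\xi\restriction U$, since $\Rad(U) = U\cap U^\perp = \{u\in U\colon u\perp U\}$. Second, $\xi$ is alternating, so every point is isotropic, i.e.\ $p\perp p$ for any point $p$.

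For (ii)\,$\Rightarrow$\,(i) I would assume $U = U_0 + p$ with $U_0\in\regsuby$ and $p\subset U_0^\perp$ a point. First note $p\not\subset U_0$: otherwise $p\subset U_0\cap U_0^\perp = \Rad(U_0) = \gen{0}$, which is impossible for a point; hence $U = U_0\oplus p$. Since $p$ is isotropic we have $p\perp p$, and together with $p\perp U_0$ this gives $p\perp U$, so $p\subset U\cap U^\perp = \Rad(U)$. For the reverse inclusion take $w\in\Rad(U)$ and write $w = u_0 + t$ with $u_0\in U_0$ and $t\in p$; from $w\perp U_0$ (as $w\perp U\supseteq U_0$) and $t\perp U_0$ (as $t\in p\subset U_0^\perp$) we get $u_0 = w-t\perp U_0$, so $u_0\in\Rad(U_0)=\gen{0}$ and $w = t\in p$. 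Thus $\Rad(U)=p$ is a point and $U\in\tangsuby$.

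For (i)\,$\Rightarrow$\,(ii) I would assume $U\in\tangsuby$, so $p := \Rad(U)$ is a point, pick any complement $U_0$ of $p$ in $U$, i.e.\ $U = U_0\oplus p$, and take $p$ itself as the distinguished point. Since $U_0\subset U$ we have $U^\perp\subset U_0^\perp$, and $p\subset U^\perp$, so $p\subset U_0^\perp$ and $U = U_0 + p$ has the required form. It remains to check that $U_0$ is \emph{regular}: if $u\in U_0$ satisfies $u\perp U_0$, then for an arbitrary $w = w_0 + t\in U$ (with $w_0\in U_0$, $t\in p$) we compute $\xi(u,w)=\xi(u,w_0)+\xi(u,t)=0$, the first summand vanishing by $u\perp U_0$ and the second by $t\in p\subset U^\perp$; hence $u\perp U$, so $u\in\Rad(U)=p$, and as $u\in U_0$ with $U_0\cap p=\gen{0}$ we conclude $u=0$. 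Therefore $\Rad(U_0)=U_0\cap U_0^\perp=\gen{0}$, i.e.\ $U_0\in\regsuby$.

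The one point requiring care — what I would flag as the main, if modest, obstacle — is the regularity of $U_0$ in the second implication: one must remember that \emph{regular} refers to the orthogonal complement taken in all of $\field V$, not merely inside $U$, so the argument has to promote ``$u$ annihilates $U_0$'' to ``$u$ annihilates all of $U$'' before invoking $\Rad(U)=p$. Once this reduction is in place everything else is routine bookkeeping, which is why the statement may fairly be called evident.
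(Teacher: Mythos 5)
Your proof is correct. The paper states this fact without any proof, labelling it an ``evident observation,'' and your argument is precisely the routine verification the authors have in mind: identify $p$ with $\Rad(U)$, split $U=U_0\oplus p$, and use that an alternating form makes every point isotropic. In particular your flagged point --- promoting $u\perp U_0$ to $u\perp U$ via $p\subset U^\perp$ before invoking $\Rad(U)=p$ --- is exactly the small step that makes the claim ``evident,'' and both implications check out.
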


\begin{fact}[{\cite[Lemma 1.2, Cor. 1.3]{grasregul}}]
\label{fct:reghipcia}
  Let $Y \in \Sub({\field V})$. 
  If\/ $Y \supset X\in\regsuby$  
  ($Y \subset X\in\regsuby$) then 
    $\rdim(Y) \leq \codim_{Y}(X)$ ($\rdim(Y) \leq \codim_{X}(Y)$, resp.).
  \par
  Consequently, 
  if $Y$ contains a regular hyperplane 
  or $Y$ is a hyperplane of a regular subspace then 
  $Y$ is tangential.   
\end{fact}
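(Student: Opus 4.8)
The plan is to reduce both inequalities to two elementary facts about the symplectic orthogonality. The first is the nondegeneracy identity $\dim(U) + \dim(U^\perp) = n$, valid for every $U \in \Sub(\field V)$ because $\xi$ is nondegenerate on $\field V$. The second is its consequence for a regular subspace $X$: since $\Rad(X) = X \cap X^\perp = 0$ and the two dimensions sum to $n$, we get the decomposition $X \oplus X^\perp = \field V$. Everything else is bookkeeping with the dimension formula $\dim(A \cap B) = \dim(A) + \dim(B) - \dim(A + B)$, plus one parity remark at the very end.

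For the first inequality, assume $X \in \regsuby$ and $X \subset Y$. I would first observe that $\Rad(Y) = Y \cap Y^\perp$ is orthogonal to all of $Y$, hence in particular to $X \subset Y$, so that $\Rad(Y) \subseteq Y \cap X^\perp$. It then remains to compute $\dim(Y \cap X^\perp)$. Because $X$ is regular we have $X + X^\perp = \field V$, and since $X \subset Y$ this already forces $Y + X^\perp = \field V$; the dimension formula then yields $\dim(Y \cap X^\perp) = \dim(Y) + \dim(X^\perp) - n = \dim(Y) - \dim(X) = \codim_Y(X)$. Combining the inclusion with this count gives $\rdim(Y) = \dim(\Rad(Y)) \leq \codim_Y(X)$.

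For the second inequality, assume $Y \subset X \in \regsuby$, and pass to the nondegenerate symplectic space $(X, \xi\restriction{X})$. Since $Y \subseteq X$, every vector of $\Rad(Y) = Y \cap Y^\perp$ lies in $X$, so $\Rad(Y) = Y \cap (X \cap Y^\perp) \subseteq X \cap Y^\perp$; that is, the radical of $Y$ computed in $\field V$ is the same as the one computed inside $X$. As $\xi\restriction{X}$ is nondegenerate, the orthogonality restricted to $X$ satisfies $\dim(X \cap Y^\perp) = \dim(X) - \dim(Y) = \codim_X(Y)$, whence $\rdim(Y) \leq \codim_X(Y)$.

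For the final assertion, both special cases force $\codim = 1$, hence $\rdim(Y) \leq 1$; it remains to exclude $\rdim(Y) = 0$, i.e.\ to show $Y$ is not regular. This is where the main (and only delicate) point enters, and it is a parity argument rather than a dimension count: if $X$ is a regular hyperplane of $Y$, or $Y$ is a hyperplane of a regular $X$, then $\dim(Y)$ is \emph{odd}, because regular subspaces have even dimension (as already noted, $\regsuby_k \neq \emptyset$ implies $2 \dzieli k$). An alternating form on an odd-dimensional space is necessarily degenerate, its rank being even, so $\rdim(Y) \geq 1$. Together with the bound above this gives $\rdim(Y) = 1$, i.e.\ $Y \in \tangsuby$. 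I expect the only step requiring care to be the identification of $\Rad(Y)$ in the second inequality; it works precisely because $Y$ lies inside $X$, so restricting the form to $X$ leaves the radical of $Y$ untouched.
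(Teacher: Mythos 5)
Your proof is correct and complete. Note that the paper itself offers no proof of this fact --- it is imported verbatim from \cite[Lemma 1.2, Cor. 1.3]{grasregul} --- so there is nothing internal to compare against; but your argument is the natural one and every step checks out: the inclusion $\Rad(Y)\subseteq Y\cap X^\perp$ together with $Y+X^\perp\supseteq X+X^\perp={\field V}$ gives the first bound, the passage to the nondegenerate restriction $\xi\restriction{X}$ (where the radical of $Y$ is unchanged because $Y\subseteq X$) gives the second, and the parity observation that an alternating form on an odd-dimensional space is degenerate is exactly what upgrades $\rdim(Y)\leq 1$ to $\rdim(Y)=1$ in the hyperplane cases, yielding $Y\in\tangsuby$.
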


Let us consider the incidence geometry 
  $\Upsilon = \left( \symsuby_k\colon k = 0,\dots, n \right)$,
where 
\begin{ctext}
  $\symsuby_k = \left\{ \begin{array}{ll}
    \regsuby_k & \text{when } 2\dzieli k \\ \tangsuby_k & \text{when } 2 \ndzieli k
\end{array}
\right.$.
\end{ctext}
In a more concise way we can simply write
\begin{equation}\label{def:symsuby}
  \symsuby = \left\{ U\in\Sub({\field V})\colon \rdim(U)\leq 1 \right\}.
\end{equation}
Our incidence geometry $\Upsilon$ is a quasi Curtis-Phan-Tits as introduced 
in \cite[Sec.~3]{BH08} (cf. also \cite{hipsympl}, \cite{Hal89}, \cite{Hal88}).
The family $\symsuby_k$ is the set of all objects of type $k$ in this geometry.
\\
Note that the family $\symsuby$ remains invariant under the map
\begin{ctext}
  $\varkappa\colon\Sub({\field V})\ni U \longmapsto U^\perp$.
\end{ctext}
We adopt the following convention.
\begin{itemize}\def\labelitemi{--}
\item
  Let $H \in\Sub_{k-1}(\field V)$, $B\in\Sub_{k+1}({\field V})$.
  \par
  A \emph{projective top}\/ is a set of the form 
  $\atopof(B) = \left\{ U\in\Sub_k({\field V}) \colon U\subset B \right\}$
  and 
  a \emph{projective star} is a set of the form 
  $\astarof(H) = \left\{ U\in\Sub_k({\field V}) \colon H\subset U \right\}$.
  For $H\subset B$  a \emph{projective pencil} is a set of the form  
  $\apek(H,B) = \atopof(B)\cap\astarof(H)$.
  The class of projective pencils will be denoted by
    $\apeki_k = \apeki_k({\field V})$.
\item
  Let $H\in\symsuby_{k-1}$, $B\in\symsuby_{k+1}$.
  \par
  A \emph{$\symsuby$-top}\/ is a set of the form
  $\topof(B) = \left\{ U\in\symsuby_k \colon U\subset B \right\}$
  and 
  a \emph{$\symsuby$-star} is a set of the form 
  $\starof(H) = \left\{ U\in\symsuby_k \colon H\subset U \right\}$.
  Clearly, 
    $\topof(B)\subset\atopof(B)$ and $\starof(H)\subset\astarof(H)$.
  For $H\subset B$ a \emph{$\symsuby$-pencil} is a set of the form 
  $\pek(H,B) = \topof(B)\cap \starof(H)$.
  The class of {\em nonempty} $\symsuby$-pencils will be denoted by 
    $\peki_k = \peki_k(\symsuby)$.
\end{itemize}
From \ref{fct:reghipcia} we immediately get
\begin{fact}\label{fct:senspek}\strut
\begin{sentences}\itemsep-2pt
\item
  Let $H\in\regsuby_{k-1}$, 
  $H \subset B\in\regsuby_{k+1}$. Then 
  $\apek(H,B) \subset \tangsuby$. 
\item
  Let 
    $H\in\Sub_{k-1}({\field V})$,  $H\subset B\in\Sub_{k+1}({\field V})$,
  and 
    $\apek(H,B)\cap\regsuby_k \neq \emptyset$.
  Then 
    $H,B\in\tangsuby$.
\end{sentences}
\end{fact}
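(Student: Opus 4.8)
The plan is to read off both assertions directly from the ``Consequently'' clause of Fact~\ref{fct:reghipcia}. In each case the relevant $k$-dimensional subspace differs in dimension by exactly one from a regular subspace lying immediately above or below it, so it either contains a regular hyperplane or is itself a hyperplane of a regular subspace; Fact~\ref{fct:reghipcia} then delivers tangentiality at once, and almost no further work is needed.

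For the first sentence I would take an arbitrary $U\in\apek(H,B)$, so that $H\subset U\subset B$ with $\dim(U)=k$, $\dim(H)=k-1$ and $\dim(B)=k+1$. Then $H$ is a regular hyperplane of $U$ (and, dually, $U$ is a hyperplane of the regular subspace $B$), whence $U\in\tangsuby$ by Fact~\ref{fct:reghipcia}. Since $U$ was an arbitrary member of the pencil, this gives $\apek(H,B)\subset\tangsuby$.

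For the second sentence I would fix a witness $U\in\apek(H,B)\cap\regsuby_k$, so that $H\subset U\subset B$ with $U$ regular of dimension $k$. Here $H$ is a hyperplane of the regular subspace $U$, while $B$ contains the regular hyperplane $U$; applying the two dual forms of Fact~\ref{fct:reghipcia} yields $H\in\tangsuby$ and $B\in\tangsuby$ respectively.

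The argument is purely a matter of bookkeeping with dimensions, so I expect no genuine obstacle. The only point deserving a word is that the conclusion is tangentiality ($\rdim=1$) and not merely $\rdim\le1$: since $\xi$ is symplectic a regular subspace has even dimension, so a subspace whose dimension is one less or one more than that of a regular subspace cannot itself be regular, which forces its radical to be nonzero; combined with the bound $\rdim\le1$ this pins $\rdim$ down to exactly $1$. As this parity remark is already absorbed into the statement of Fact~\ref{fct:reghipcia}, both sentences indeed follow immediately.
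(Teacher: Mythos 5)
Your proof is correct and follows exactly the paper's route: the paper derives Fact~\ref{fct:senspek} immediately from the ``Consequently'' clause of Fact~\ref{fct:reghipcia}, precisely as you do, and your closing remark on why the radical has dimension exactly $1$ (parity of regular subspaces in the symplectic case) is the same observation already built into that clause. Nothing is missing.
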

In view of \ref{fct:senspek}, following a standard way 
(cf. \cite{grasregul}, \cite{cohen})
one can construct  
the {\em Grassmann space} (or the {\em space of pencils} in other words) 
\begin{ctext}
  $\PencSpace(\symsuby,k) = \struct{\symsuby_k,\peki_k(\symsuby)}$ 
\end{ctext}
for every integer $1 \leq k \leq n-1$.
In particular (cf. \ref{fct:postac:tangi}), each regular subspace and each
tangential subspace is in a pencil, and
each pencil, if nonempty, has at least two elements.
Therefore, $\PencSpace(\symsuby,k)$ is a partial linear space.
The map $\varkappa$ yields an isomorphism of 
$\PencSpace(\symsuby,k)$ and $\PencSpace(\symsuby,{n-k})$.
Let us also point out that our Grassmann space $\PencSpace(\symsuby,k)$ can be
viewed as a specific shadow space of the Curtis-Phan-Tits geometry $\Upsilon$
(cf. \cite[Sec.~4]{cohen}).

%
%
In view of \ref{fct:senspek}, there is no way to define a space of pencils
associated with regular subspaces alone, i.e. with the incidence geometry 
  $\left( \regsuby_k\colon 1 \leq k\leq n-1 \right)$.
That is why we need the geometry with both regular {\em and} tangential subspaces
in symplectic case.

  Clearly, the notion of a tangential subspace makes sense in arbitrary metric 
  projective geometry determined by a reflexive nondegenerate form (cf. \cite{tangi}).
  In the case of a symmetric form $\xi$ one can consider incidence geometries 
  with tangential subspaces of arbitrary dimension only, or 
  with tangential {\em and} isotropic,
  or with tangential {\em and} regular; 
  in each case a reasonable incidence geometry arises and
  sensible Grassmann spaces can be investigated. Many of the lemmas which we prove in the 
  paper remain valid for a symmetric form $\xi$. However, we do not intend to
  develop the general theory of Grassmannians of regular and tangential subspaces.
  In essence, our intention is to show how to enrich the class of regular subspaces
  of  a {\em symplectic} geometry, as easily as possible, to be able to
  construct a reasonable Grassmann space over it and obtain analogous results as
  in the case of symmetric form and regular subspaces alone.

\smallskip
For $U, W\in\symsuby_k$ we write $U\badjac W$ when 
$U, W$ are collinear in $\PencSpace(\symsuby,k)$.
The goal of this paper is to prove
\begin{thm}\label{thm:main}
  Let $k \neq n - k$. Then the underlying projective symplectic geometry
  $(\fixproj,\perp)$ can be defined in terms of the adjacency $\badjac$ on $\symsuby_k$;
  consequently, $(\fixproj,\perp)$ can be defined in terms of
  the geometry of 
  its Grassmann space $\PencSpace(\symsuby,k)$. 
  In particular, when $2 \dzieli k$, then $(\fixproj,\perp)$ can be defined in terms
  of the adjacency $\badjac$ on its regular $k$-subspaces $\symsuby_k = \regsuby_k$.
\end{thm}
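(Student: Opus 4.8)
The plan is to recover $(\fixproj,\perp)$ from the abstract graph $(\symsuby_k,\badjac)$ along the lines of the classical Grassmannian recovery, reconstructing first the projective incidence and then the metric structure. First I would express $\badjac$ dimension-theoretically. Since a $\symsuby$-pencil $\pek(H,B)$ is completely determined by its core $H$ and its top $B$, any two distinct collinear $U,W\in\symsuby_k$ must satisfy $U\cap W=H$ and $U+W=B$; hence $U\badjac W$ holds if and only if $\dim(U\cap W)=k-1$ together with $U\cap W\in\symsuby_{k-1}$ and $U+W\in\symsuby_{k+1}$ (the pencil $\pek(U\cap W,U+W)$ then contains both $U$ and $W$, so it is nonempty). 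For even $k$, where $\symsuby_k=\regsuby_k$, Fact~\ref{fct:reghipcia} makes both membership conditions automatic, so there $U\badjac W\iff\dim(U\cap W)=k-1$; for odd $k$ one retains the two $\rdim\le 1$ side-conditions.

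Next I would determine the maximal cliques of $(\symsuby_k,\badjac)$ and show that, exactly as in the projective Grassmann space, they split into two families: the $\symsuby$-stars $\starof(H)$ with $H\in\symsuby_{k-1}$, and the $\symsuby$-tops $\topof(B)$ with $B\in\symsuby_{k+1}$. Each is readily seen to be a clique, and the work is to prove maximality and that no other maximal cliques occur. Because $\symsuby$ is strictly thinner than $\Sub(\field V)$ --- it discards every subspace of radical dimension $\ge 2$ --- the delicate point is that this thinning neither destroys maximality nor manufactures spurious cliques; this is precisely what Facts~\ref{fct:postac:tangi}, \ref{fct:reghipcia} and \ref{fct:senspek} are meant to control, by guaranteeing that every member of $\symsuby_k$ lies in sufficiently many pencils and that the cores and tops of those pencils remain in $\symsuby$.

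With the two clique-families in hand I would recover $\fixproj$. A star $\starof(H)$ and a top $\topof(B)$ meet in a pencil (a line of the Grassmann space) exactly when $H\subset B$, which reconstructs the incidence between $\symsuby_{k-1}$ and $\symsuby_{k+1}$; passing to collinearity of stars, respectively of tops, yields the Grassmann structure one level down, respectively up, and iterating by the standard bootstrap (cf.~\cite{grasregul}, \cite{cohen}) recovers the full projective space $\struct{\Sub_1(\field V),\Sub_2(\field V),\subset}=\fixproj$ together with the distinguished point-set $\symsuby_k\subset\Sub_k(\field V)$. It is here that the hypothesis $k\neq n-k$ is indispensable: the polarity $\varkappa\colon U\mapsto U^{\perp}$ is an isomorphism $\PencSpace(\symsuby,k)\to\PencSpace(\symsuby,n-k)$, so when $k=n-k$ it becomes an adjacency-preserving automorphism of $\PencSpace(\symsuby,k)$ which is a correlation rather than a collineation and which interchanges stars with tops; assuming $k\neq n-k$ breaks this self-duality, fixes the star/top dichotomy, and forces the recovery to produce $\fixproj$ itself rather than $\fixproj$ up to duality, in exact parallel with Chow's theorem for $2k\neq n$.

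Finally I would extract $\perp$. Having $\fixproj$ and the definable subfamily $\symsuby_k$, I observe that any bijection preserving $\badjac$ induces a collineation of $\fixproj$ that preserves $\symsuby_k$ as a set, hence preserves regularity in dimension $k$. For even $k$ this forces isotropy of $2$-subspaces to be preserved after descending to the line level, so the relation $p\perp q\iff p+q\ \text{is isotropic}$ is preserved; equivalently the symplectic polarity is preserved and $\perp$ is definable. For odd $k$ one argues instead through the definable radical map $U\mapsto\Rad(U)$ on tangential subspaces. I expect the two genuinely hard parts to be, first, the clique analysis above, where the $\rdim\le 1$ constraint must be shown to leave the star/top dichotomy intact, and, second, this last metric step: unlike the purely projective Grassmann recovery it must produce $\perp$ out of the mere knowledge of the distinguished family $\symsuby_k$, and it must be carried through uniformly in the even and the odd cases.
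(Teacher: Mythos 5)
Your overall architecture --- a dimension-theoretic reading of $\badjac$, identification of two families of distinguished subsets, a bootstrap down to the point level, and extraction of $\perp$ from the resulting copolar space --- is the paper's architecture, and your even-$k$ branch is essentially the paper's: there $\badjac$ coincides with projective adjacency \eqref{krok:5}, the stars and tops are exactly the maximal $\badjac$-cliques \eqref{krok:7}, they carry affine (not projective) geometries of dimensions $n-k$ and $k$ \eqref{krok:6}, and $k\neq n-k$ separates the two families.

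There is, however, a genuine gap in the odd-$k$ case, and it is not the difficulty you flag. You propose to ``determine the maximal cliques of $(\symsuby_k,\badjac)$'' and assert that each star and each top ``is readily seen to be a clique''. For odd $k$ this is false: by \eqref{krok:1} and \eqref{krok:2} the restriction of $\PencSpace(\symsuby,k)$ to a star $\starof(H)$ (to a top $\topof(B)$) is an $(n-k)$-dimensional (a $k$-dimensional) \emph{symplectic copolar space}, in which $H+q_1$ and $H+q_2$ are adjacent only when $q_1\not\perp q_2$ inside $H^\perp$; non-adjacent pairs abound, so stars and tops are not cliques and the maximal-clique machinery cannot start. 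The paper's substitute is the triangle analysis of Fact~\ref{fct:copol}: a triangle spans a dual affine plane inside a unique star or top, planes are chained by the relations $\blisk$ and $\bliskx$ and their transitive closure, the union $\widetilde{\Delta}$ of \eqref{eq:rozpiecie:1} recovers the whole star or top \eqref{krok:3}, and ternary collinearity is then defined by the first-order formula \eqref{def:adjac2col}, which quantifies over auxiliary non-adjacent pairs $W_1\not\badjac W_2$. None of this is suggested by your sketch, and without it the odd case does not go through. Two smaller points: your induction must carry the star/top distinction down a level even when $k-1=n-(k-1)$; the paper does this by noting that the tops of $\fixprojr_{k-1}$ are the sets $\{H\colon U\in\starof(H)\}$ indexed by points $U$ of $\fixprojr_k$, so they are distinguished by construction rather than by dimension. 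And your final metric step is cleaner than you make it: at $k=1$ the recovered structure is the copolar space $\fixprojr$, where $a_1\perp a_2$ iff $a_1=a_2$ or $a_1\nadjac a_2$ and the projective lines come from the polar-space formula of \eqref{krok:start}; no argument about induced collineations is needed for definability.
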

For this purpose we first prove
\begin{prop}\label{prop:main}
  Let $k \neq n-k$.
  If\/ $k > 1$ then  $\PencSpace(\symsuby,k-1)$ 
  can be defined in terms of the binary adjacency relation $\badjac$
  on $\symsuby_k$ and consequently, it can be defined in terms of\/ $\PencSpace(\symsuby,k)$.
  If\/ $k < n-1$ then $\PencSpace(\symsuby,k+1)$
  can be defined in terms of the binary adjacency relation $\badjac$
  on $\symsuby_k$ so, it can be defined in terms of\/ $\PencSpace(\symsuby,k)$.  
\end{prop}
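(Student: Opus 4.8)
The plan is to recover the neighbouring Grassmann spaces from the abstract graph $(\symsuby_k,\badjac)$ by the star/top method, treating the two parities of $k$ separately. First I would describe $\badjac$ linearly: distinct $U,W\in\symsuby_k$ are adjacent exactly when they lie on a common $\symsuby$-pencil, i.e. when $\dim(U\cap W)=k-1$ together with $U\cap W\in\symsuby_{k-1}$ and $U+W\in\symsuby_{k+1}$. For even $k$ the members are regular and, by Fact~\ref{fct:reghipcia}, a hyperplane of, and a cover of, a regular $k$-space is automatically tangential; hence $U\badjac W\iff\dim(U\cap W)=k-1$, a clean condition. For odd $k$ the members are tangential and the conditions $U\cap W\in\regsuby_{k-1}$, $U+W\in\regsuby_{k+1}$ are genuine restrictions; in particular two tangential $k$-spaces sharing a radical point are never adjacent, since that point lies in the radical of their sum.

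In the even case I would run the classical argument. Each $\symsuby$-star $\starof(H)$ ($H\in\symsuby_{k-1}$) and each $\symsuby$-top $\topof(B)$ ($B\in\symsuby_{k+1}$) is a maximal clique; as induced subspaces of $\PencSpace(\symsuby,k)$ they are affine spaces of dimension $n-k$ and $k$ respectively, the non-regular directions being removed. Conversely every maximal clique is a star or a top: I would take three pairwise adjacent members, prove the usual dichotomy (a common $(k-1)$-space or a common $(k+1)$-space), propagate it along the clique, and check that the resulting common subspace again lies in $\symsuby$. The hypothesis $k\ne n-k$ then separates the two families by their dimension; the excluded equality is precisely the case in which $\varkappa\colon U\mapsto U^\perp$ is an automorphism interchanging stars and tops, making them indistinguishable. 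Identifying $\symsuby_{k-1}$ with the stars and $\symsuby_{k+1}$ with the tops, and reading $H\subset U$ as $U\in\starof(H)$, recovers the point sets of $\PencSpace(\symsuby,k\pm 1)$.

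It remains to recover collinearity, and this is the real difficulty. For $H_1,H_2\in\symsuby_{k-1}$ the stars $\starof(H_1),\starof(H_2)$ meet in a single element precisely when $\dim(H_1\cap H_2)=k-2$ and $H_1+H_2\in\symsuby_k$; but collinearity in $\PencSpace(\symsuby,k-1)$ additionally requires $H_1\cap H_2\in\symsuby_{k-2}$, i.e. that the meet be \emph{regular}. A direct computation shows that, for even $k$, this regularity is equivalent to $\Rad(H_1)\not\perp\Rad(H_2)$, a metric condition on the radical points. Thus a single-point intersection of stars is only a candidate for collinearity, and the genuine collinearity relation, which already encodes a fragment of the orthogonality $\perp$, must be extracted from the combinatorics of $\badjac$ alone; this is the main obstacle. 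I would attack it by analysing how such candidate pairs sit inside larger pencil-configurations of the graph, the genuine (regular-meet) pairs being exactly those admitting the appropriate extensions, and once the correct collinearity is in hand the pencils of $\PencSpace(\symsuby,k-1)$ are recovered from it in the same manner, as the intersections of its own stars and tops.

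Finally the odd case, where the scheme must be adapted because stars and tops are no longer cliques: on a star or a top the induced adjacency is a copolar adjacency of the radical points (two tangential $k$-spaces being adjacent iff their radicals are non-orthogonal), so the maximal cliques are proper copolar subconfigurations and their classification demands a separate, more delicate analysis exploiting that the radical points occurring in a clique are distinct and suitably generic. Having settled both parities, the statement about $\PencSpace(\symsuby,k+1)$ is the mirror image of the one about $\PencSpace(\symsuby,k-1)$, obtained by using $\symsuby$-tops in place of $\symsuby$-stars throughout. Since $\badjac$ is by definition the collinearity of $\PencSpace(\symsuby,k)$, every definition phrased in terms of $\badjac$ is a fortiori a definition in terms of $\PencSpace(\symsuby,k)$, which gives the ``consequently'' clauses.
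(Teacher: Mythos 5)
Your outline matches the paper's strategy---stars and tops, identification of $\symsuby_{k\pm1}$ with them, and recovery of collinearity from their intersection pattern---and your preliminary computations are correct: the linear description of $\badjac$ for each parity is \eqref{krok:5} and its odd counterpart, and the equivalence of regularity of $H_1\cap H_2$ with $\Rad(H_1)\not\perp\Rad(H_2)$ is exactly what makes a star into a copolar space in \eqref{krok:1}. But the two steps that carry the actual weight of the proposition are only announced, not carried out, and they occur precisely at the places you yourself flag as ``the real difficulty'' and ``the main obstacle''.

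First, for odd $k$ the stars $\starof(H)$ and tops $\topof(B)$ are \emph{not} cliques of $\badjac$: by \eqref{krok:1} and \eqref{krok:2} the induced structure on each is a symplectic copolar space, so your plan of classifying maximal cliques is not the right tool there, and you give no replacement. The paper instead recovers stars and tops as the sets $\widetilde{\Delta}$ obtained by closing the plane spanned by a triangle under the relations $\blisk$ and $\bliskx$, which requires the whole list \ref{fct:copol}\eqref{copol:4}--\eqref{copol:10}, and then defines collinearity by the explicit formula \eqref{def:adjac2col}, whose correctness rests on \ref{fct:copol}\eqref{copol:12}. Second, for even $k$ you correctly observe that intersecting stars only yields $\topadjac$ on $\tangsuby_{k-1}$ (dually $\botadjac$ on $\tangsuby_{k+1}$), which is strictly weaker than collinearity in $\PencSpace(\symsuby,k-1)$; but ``analysing how such candidate pairs sit inside larger pencil-configurations'' is a statement of intent, not an argument. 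This is exactly the content of \eqref{krok:21}--\eqref{krok:updown}: one classifies $\botadjac$-triangles into $\stars$-, $\tops$- and $\topsx$-triangles, separates them by whether $\agen{U_1,U_2,U_3}{\botadjac}$ is a clique and whether $\not\botadjac$ is transitive on it, and only then writes down the collinearity formula \eqref{def:botadjac2col}. Without these two blocks the point sets of $\PencSpace(\symsuby,k\pm1)$ are identified, but their line structure---which is what the proposition asserts is definable---is not.
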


In case $k =1$, the Grassmann space $\PencSpace(\symsuby,1)$ is isomorphic to 
the structure 
$\fixprojr = \struct{\symsuby_1,\symsuby_2,\subset}$,
which is a {\em copolar space} embedded in \fixproj \ 
(cf. \cite{copolar}, \cite{hipsympl}, \cite{embcopol}, and \cite{duaf:copol});
it is simply the line-complement of the corresponding 
polar space 
  $\struct{\Sub_1({\field V}),\Quadr_2,\subset}$.
Informally, we can say that the incidence geometry \eqref{def:symsuby}
is defined over subspaces of \fixprojr \ and write
  $\PencSpace(\symsuby,k) = \PencSpace(\fixprojr,k-1)$.
This approach, however, makes it hard to characterize tangential and regular 
subspaces 
in the language of pure incidence structure \fixprojr.

The validity of Theorem \ref{thm:main} in case $k =1$ is a consequence
of elementary properties of polar spaces (cf. \eqref{krok:start} in Sec.~\ref{sec:proofs}).
Therefore, by induction, 
Theorem \ref{thm:main} follows from \ref{prop:main}.
As a corollary to \ref{thm:main} we obtain
\begin{cor}[A variant of Chow Theorem]\label{cor:main}
  Let $k \neq n - k$. 
  The three classes of maps of\/ $\symsuby_k$:
  \begin{itemize}\def\labelitemi{-}\itemsep-2pt
  \item
    the automorphisms of\/ $\PencSpace(\symsuby,k)$,
  \item
    the bijections which preserve the adjacency $\badjac$ (in both directions),
  \item
    the collineations of\/ $\fixproj$ preserving the conjugacy $\perp$ and
    acting on $\symsuby_k$.
  \end{itemize}
  all coincide.
  \par
  If $k = n - k$ then a bijection of $\symsuby_k$ which preserves $\badjac$ is either 
  determined  by an automorphism of $(\fixproj,\perp)$ or it is a composition of the duality 
  $\varkappa$ and the map determined by an automorphism of $(\fixproj,\perp)$.
\end{cor}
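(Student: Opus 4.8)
The plan is to designate the three families of maps as (A) the automorphisms of $\PencSpace(\symsuby,k)$, (B) the bijections of $\symsuby_k$ preserving $\badjac$ in both directions, and (C) the restrictions to $\symsuby_k$ of those collineations of $\fixproj$ which preserve $\perp$, and then to prove the cyclic chain $(C)\subseteq(A)\subseteq(B)\subseteq(C)$, which forces the three classes to coincide. Only the inclusion $(B)\subseteq(C)$ uses the substance of Theorem~\ref{thm:main}; the other two are routine.

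First I would dispose of the two easy inclusions. For $(C)\subseteq(A)$: a collineation $\varphi$ of $\fixproj$ preserving $\perp$ also preserves linear dimension and conjugacy, hence the radical and the value of $\rdim$ of every subspace; thus it maps $\symsuby$ onto itself and carries each $\symsuby$-pencil onto a $\symsuby$-pencil (the pencil $\pek(H,B)$ going to the one determined by $\varphi(H)$ and $\varphi(B)$), so its restriction to $\symsuby_k$ belongs to (A). For $(A)\subseteq(B)$: an automorphism of the point--line space $\PencSpace(\symsuby,k)$ sends lines onto lines in both directions, hence preserves collinearity, that is $\badjac$, in both directions.

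The core inclusion $(B)\subseteq(C)$ is delivered by Theorem~\ref{thm:main}. Let $f$ be a bijection of $\symsuby_k$ preserving $\badjac$; then $f$ is an automorphism of the relational structure $(\symsuby_k,\badjac)$. By Theorem~\ref{thm:main} the points and lines of $\fixproj$ together with the conjugacy $\perp$ are definable purely from $(\symsuby_k,\badjac)$ (one iterates Proposition~\ref{prop:main} downward through $\PencSpace(\symsuby,k-1),\dots,\PencSpace(\symsuby,1)\cong\fixprojr$, and then recovers $(\fixproj,\perp)$ from the copolar space $\fixprojr$ as in the base case $k=1$). Since this construction is definable, it commutes with every automorphism of $(\symsuby_k,\badjac)$, so $f$ induces an automorphism $\widehat f$ of $(\fixproj,\perp)$, i.e.\ a $\perp$-preserving collineation. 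It then remains to verify that $\widehat f$ restricts to $f$ on $\symsuby_k$: every $U\in\symsuby_k$ comes, in the reconstruction, with its definable set of points, an incidence that $f$ respects; hence $\widehat f$ carries the points of $U$ onto those of $f(U)$, and therefore $\widehat f(U)=f(U)$ as subspaces. This closes the cycle and yields the coincidence of (A), (B) and (C) when $k\neq n-k$.

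I expect the genuine obstacle to lie in the case $k=n-k$. Here the downward reconstruction still exposes the two families of maximal cliques — the $\symsuby$-stars and the $\symsuby$-tops — but can no longer label them canonically, precisely because the duality $\varkappa\colon U\mapsto U^\perp$ is an involution ($\varkappa^2=\mathrm{id}$) that fixes $\symsuby_k$ setwise, interchanges stars with tops, and preserves $\badjac$ although it is not induced by any collineation of $\fixproj$. The argument then bifurcates: an adjacency-preserving $f$ either respects the partition of the maximal cliques into stars and tops, in which case the reasoning above makes $f$ a $\perp$-preserving collineation, or it swaps the two families, in which case $\varkappa\circ f$ respects the partition and is induced by an automorphism $g$ of $(\fixproj,\perp)$, whence $f=\varkappa\circ g$. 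The delicate heart of this case is to prove that the two alternatives are exhaustive and mutually exclusive — equivalently, that $f$ cannot interchange the clique types selectively but must treat all stars alike — so that $\varkappa$ is the only extra symmetry introduced at $k=n-k$.
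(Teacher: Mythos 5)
Your handling of the case $k\neq n-k$ is essentially the paper's own route: the coincidence of the first two classes is the mutual definability of $\PencSpace(\symsuby,k)$ and $\struct{\symsuby_k,\badjac}$ recorded in \eqref{krok:30}, and the passage from a $\badjac$-preserving bijection to a $\perp$-preserving collineation is the downward induction underlying Theorem~\ref{thm:main}, formalized in Lemma~\ref{lem:mainbis}\eqref{lem:mainbis:b} through the induced maps $F^{-}$ and $F^{+}$. The two ``easy'' inclusions you add are fine. One point you should make explicit in the inductive descent is that at each level the stars and tops of the new Grassmannian must again be distinguishable; the paper secures this by observing that the tops of $\fixprojr_{k-1}$ are exactly the sets $\{H\colon U\in\starof(H)\}$ parametrized by points $U$ of $\fixprojr_k$.

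The genuine gap is in the case $k=n-k$: you set up the correct dichotomy but explicitly leave unproved the step you yourself call its ``delicate heart,'' namely that a $\badjac$-preserving bijection $F$ cannot send some stars to stars and other stars to tops. The paper closes this with a concrete two-part argument that your proposal lacks. First, if $H\topadjac H'$ in $\symsuby_{k-1}$ then $\starof(H)\cap\starof(H')$ consists of the single element $H+H'$, whereas by \eqref{krok:peki1} and \eqref{krok:peki2} a star and a top meet either in the empty set or in a pencil with at least two elements; since $F$ carries maximal cliques to maximal cliques and preserves intersection cardinalities, two stars meeting in exactly one point must go to two cliques of the same type, so the property ``$F(\starof(H))\in\stars_k$'' propagates along $\topadjac$. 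Second, the connectedness statement \eqref{krok:con4} guarantees that any two elements of $\symsuby_{k-1}$ are joined by a $\topadjac$-chain, so this property propagates from one star to all stars. Only after these two inputs is your bifurcation exhaustive: either $F$ preserves $\stars_k$ and $\tops_k$ and is induced by a collineation as in the first case, or it maps every star to a top, and then $F\circ\varkappa$ preserves both families and one concludes as you indicate. Without the intersection-cardinality observation and the connectedness argument, the $k=n-k$ half of the corollary is not proved.
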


Frequently (cf. e.g. \cite{polargras}, \cite{afpolar}), 
dealing with incidence geometry like \eqref{def:symsuby}, two more adjacencies
are considered: for $U, W\in\symsuby_k$ we write
  $U\botadjac W$ when $U\cap W\in\symsuby_{k-1}$,
and 
  $U\topadjac W$ when $U\neq W$ and 
  $U, W\subset B$ for some $B\in\symsuby_{k+1}$.
Clearly, 
  $U\badjac W$ iff $U\botadjac W$ and $U\topadjac W$.
If $2 \dzieli k$ then 
  $\badjac$, $\botadjac$, and  $\topadjac$ coincide on $\symsuby_k$.
(cf. \eqref{krok:5} in Sec.~\ref{sec:proofs}).
However, if $2 \ndzieli k$ then the relations $\badjac, \botadjac, \topadjac$
on $\symsuby_k$ are pairwise distinct.
To complete the results, we prove also 
the following.
%
%
\begin{thm}\label{thm:updown}
  Let $2 \ndzieli k$. 
  and $1 \leq k \leq n-1$. 
  For $k\neq 1$, the underlying projective symplectic geometry can
  be defined in terms of $\botadjac$ on $\tangsuby_k$, 
  and for $k\neq n-1$ in terms of $\topadjac$ 
  on $\tangsuby_k$.
  Consequently, a bijection of $\tangsuby_k$ which preserves in both
  directions the relation $\botadjac$ or 
  preserves in both directions the relation $\topadjac$
  is determined by an automorphism of the underlying projective symplectic space.
\end{thm}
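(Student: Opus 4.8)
The plan is to reduce everything to the adjacency $\badjac$, for which Theorem~\ref{thm:main} already recovers $(\fixproj,\perp)$, and to treat the two relations symmetrically through the duality $\varkappa$. Since $U\in\tangsuby_k$ gives $U^\perp\in\tangsuby_{n-k}$, with $(U\cap W)^\perp=U^\perp+W^\perp$ and $(U+W)^\perp=U^\perp\cap W^\perp$, the map $\varkappa$ is an isomorphism $\PencSpace(\symsuby,k)\to\PencSpace(\symsuby,n-k)$ carrying $\botadjac$ on $\tangsuby_k$ to $\topadjac$ on $\tangsuby_{n-k}$ and exchanging the exceptional values $k=1$ and $k=n-1$; hence it suffices to treat $\botadjac$ with $k\neq 1$, the $\topadjac$ statement following by applying $\varkappa$. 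For the local analysis, note that for distinct $U,W\in\tangsuby_k$ each of $\botadjac,\topadjac$ forces $\dim(U\cap W)=k-1$ (equivalently $\dim(U+W)=k+1$), and then $U\botadjac W\iff U\cap W\in\regsuby_{k-1}$, $U\topadjac W\iff U+W\in\regsuby_{k+1}$, with $U\badjac W$ iff both. Writing $Z=U\cap W$ for a $\botadjac$-pair, one has $\Rad U,\Rad W\subseteq Z^\perp$ and $U=Z\oplus\Rad U$, $W=Z\oplus\Rad W$; by Fact~\ref{fct:reghipcia} we have $\rdim(U+W)\in\{0,2\}$, and a direct computation of $\Rad(U+W)$ yields $U+W\in\regsuby_{k+1}\iff\Rad U\not\perp\Rad W$. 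Thus, restricted to $\botadjac$-pairs, $U\badjac W\iff\Rad U\not\perp\Rad W$, and the whole problem becomes: \emph{recover the relation ``$\Rad U\not\perp\Rad W$'' from $\botadjac$.}

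I would then localize this to bottom stars. Every $\botadjac$-pair lies in the unique bottom star $\starof(Z)$, $Z=U\cap W\in\regsuby_{k-1}$, which is a $\botadjac$-clique; the map $U\mapsto\Rad U=U\cap Z^\perp$ identifies $\starof(Z)$ with the point set of the regular symplectic space $Z^\perp$ (of dimension $n-k+1$), and ``$\Rad U\not\perp\Rad W$'' is exactly non-conjugacy in $Z^\perp$. Moreover the pencils $\apek(Z,B)$ with $B\in\regsuby_{k+1}$, $Z\subset B$, correspond bijectively to the regular lines $B\cap Z^\perp$ of $Z^\perp$, so that $\starof(Z)$ together with these pencils is precisely the symplectic copolar space over $Z^\perp$. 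Consequently, if these pencils can be singled out inside $\starof(Z)$ using $\botadjac$, the base case $k=1$ (recovery of a symplectic geometry from its copolar space, by elementary polar-space properties) reconstructs $\perp$ on $Z^\perp$, hence $\badjac$ on $\starof(Z)$; ranging over all $Z$ this defines $\badjac$ globally. This step runs parallel to the proof of Proposition~\ref{prop:main}, where stars, tops and pencils are characterised through $\badjac$.

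The main obstacle is exactly this last recovery. Inside a single bottom star $\botadjac$ is the \emph{complete} relation, so it carries no internal information, and the pencils (the regular lines of $Z^\perp$) must be detected through the interaction of $\starof(Z)$ with tangential $k$-subspaces lying outside it. Concretely one must $\botadjac$-recognize the tops $\topof(B)$, $B\in\regsuby_{k+1}$ — on which $\botadjac$ restricts to the non-conjugacy graph of the symplectic space $B$ — and then produce each pencil as an intersection $\starof(Z)\cap\topof(B)$. The difficulty is that the maximal $\botadjac$-cliques come in two kinds, the bottom stars and the maximal non-conjugacy-cliques living inside (non-clique) tops, and these must be told apart; naive criteria fail, since, for instance, the existence of a $\botadjac$-triangle with pairwise distinct intersections does \emph{not} detect regularity of $U+W$ (such triangles also occur inside a $(k+1)$-subspace of radical dimension $2$). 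Resolving this requires the finer clique/pencil analysis, in the spirit of the proof of Proposition~\ref{prop:main}, and is the technical heart of the argument.

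Finally, once $\badjac$ is defined from $\botadjac$, Theorem~\ref{thm:main} recovers $(\fixproj,\perp)$ whenever $k\neq n-k$. When $k=n-k$ the data recovered alongside $\badjac$ — namely the family of bottom stars, i.e. the distinction between $\symsuby_{k-1}$ and $\symsuby_{k+1}$ — breaks the self-duality that $\badjac$ alone leaves open (cf. Corollary~\ref{cor:main}), so the geometry is still pinned down. A bijection of $\tangsuby_k$ preserving $\botadjac$ in both directions then preserves $\badjac$ together with this bottom/top distinction, whence by Corollary~\ref{cor:main} it is induced by an automorphism of $(\fixproj,\perp)$, the duality $\varkappa$ being excluded because it sends $\botadjac$ to $\topadjac$; the $\topadjac$ version of the statement follows by conjugating with $\varkappa$.
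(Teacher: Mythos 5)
Your overall architecture is the right one and largely parallels the paper's: reduce $\topadjac$ to $\botadjac$ via the duality $\varkappa$, recover the binary adjacency $\badjac$ (equivalently the Grassmann space $\PencSpace(\symsuby,k)$) from $\botadjac$, and then invoke Theorem~\ref{thm:main} and Corollary~\ref{cor:main}. Your local computation that, for a $\botadjac$-pair, $U\badjac W$ iff $\Rad(U)\not\perp\Rad(W)$ is correct and is a clean reformulation of what must be recovered. However, there is a genuine gap, and you have located it yourself: the step that actually extracts this information from $\botadjac$ --- distinguishing the maximal $\botadjac$-cliques that are stars $\starof(Z)$, $Z\in\regsuby_{k-1}$, from the clique configurations living inside $(k+1)$-subspaces, and, among the latter, telling the regular $B\in\regsuby_{k+1}$ apart from the $B$ with $\rdim(B)=2$ --- is announced as ``the technical heart'' but never carried out. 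You even record correctly that the naive triangle criterion fails because $\botadjac$-triangles also occur inside a $(k+1)$-subspace whose radical is a line. Without this step nothing is proved: the whole theorem reduces to exactly this recognition problem.

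The paper resolves it by a concrete first-order test on the set $\agen{U_1,U_2,U_3}{\botadjac}$ attached to a $\botadjac$-triangle: the triangle is a $\stars$-triangle iff this set is itself a $\botadjac$-clique (this is \eqref{krok:21}); when it is not a clique, the two top-like cases are separated by whether the relation $\not\botadjac$ is transitive on $\agen{U_1,U_2,U_3}{\botadjac}$ --- transitive exactly when $\Rad(B)$ is a line, non-transitive when $B\in\regsuby_{k+1}$, the latter requiring the configuration of \ref{fct:copol}\eqref{copol:13} inside the copolar space $\topof(B)$ (this is \eqref{krok:22}). Collinearity is then defined by formula \eqref{def:botadjac2col} as membership in the intersection of a $\stars$-triangle's and a $\tops$-triangle's $\agen{\cdot}{\botadjac}$-sets. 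Some such combinatorial invariant is indispensable, and your proposal supplies none; so the argument as written does not close. Your concluding remarks (handling of $k=n-k$ via the recovered star/top distinction, and the exclusion of $\varkappa$ for a $\botadjac$-preserving bijection) are fine, but they rest entirely on the missing step.
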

After a quite technical analysis of geometry of Grassmannians 
$\PencSpace(\symsuby,k)$ in the first two subsections of Sec.~\ref{sec:proofs},
we give complete proofs of Proposition \ref{prop:main}, 
Theorems \ref{thm:main}, \ref{thm:updown}, and Corollary \ref{cor:main} 
in Subs.~\ref{subsec:proofs}. 
Some results, e.g.  \eqref{krok:1}, \eqref{krok:2}, \eqref{krok:6},
\eqref{krok:con4}, and \eqref{krok:30}, seem to be interesting also on their own
right.

%
\section{Technical details and proof of Theorems}\label{sec:proofs}

%
\subsection{Triangles and planes of a symplectic copolar space}

In the remainder of this paper we shall need some more information on triangles 
of symplectic copolar spaces. We gather these in the following list. Most of
them are folklore, but some are given with a short proof for completeness sake.
\begin{fact}\label{fct:copol}\strut
\begin{sentences}
\item\label{copol:1}
  Let $a_1,a_2$ be distinct projective points. 
  Then 
    $a_1,a_2$ are collinear in \fixprojr \  iff $a_1 \not\perp a_2$.
\item\label{copol:2}
  A projective plane $\pi$ contains a triangle of \fixprojr \ 
  iff $\rdim(\pi)= 1$ i.e. iff $\pi\in\tangsuby$. 
\item\label{copol:3}
  If\/ $\rdim(\pi)  =1$ then each projective line on $\pi$ is either isotropic and passes
  through $p = \Rad(\pi)$, or it is regular and misses $p$.
\item[\strut]
  \begin{normalfont}
  Let $\pi\in\tangsuby_3$ (i.e. let $\pi$ be a tangential plane).
  We write
    $\pi^\infty = \Rad(\pi)$ and $[\pi] = \pi\setminus\{ \pi^\infty \}$.
  \end{normalfont}
\item\label{copol:4}
  Let a plane $\pi\in\tangsuby$ contain a triangle $\Delta$ with regular sides. 
  Then
  the set $\overline{\Delta}$ of points on lines of \fixprojr \ 
  which cross all the sides of $\Delta$ coincides with $[\pi]$.
  {\normalfont This can be read as follows: 
  a triangle spans a plane of \fixprojr, which is a dual
  affine plane, cf. \cite{hipsympl}, \cite{gramlich}, and \cite{duaf:copol}.}
\item\label{copol:5}
  Let planes $\pi_1,\pi_2\in\tangsuby$ have a regular line $L$ in common.
  There is a tetrahedron  
  with regular edges, whose one edge is $L$, one face is
  in $\pi_1$, and other in~$\pi_2$.
  \begin{proof}
    Let $p_i = \Rad(\pi_i)$ for $i=1,2$. Then $p_1,p_2\notin L$, since otherwise
    (say, $p_1\in L$), 
    we would get $p_1 \in L \perp p_1$, so $p_1\in\Rad(L)$.
    Take any 
      $a_1\in\pi_1\setminus (L\cup \{ p_1 \})$.
    Suppose that 
      $a_1 \perp (\pi_2 \setminus (L\cup \{ p_2 \}))$.
    Then 
    $a_1 \perp \pi_2$ and thus $a_1\perp L$, 
    which gives $a_1\perp L + a_1 = \pi_1$  and $a_1 = p_1$. 
    Consequently, there is 
      $a_2\in \pi_2 \setminus (L\cup \{ p_2 \})$
    with $a_1 \not\perp a_2$ 
    and then $a_1,a_2$ are on a regular line.
    One can easily find $a_3,a_4 \in L$ 
    such that $a_1,a_2 \not\perp a_3,a_4$.
  \end{proof}
\item[\strut]
  \begin{normalfont}
  Write $\pi_1 \blisk \pi_2$ iff there is a tetrahedron as in \eqref{copol:5}
  \end{normalfont}
\item\label{copol:6}
  Let planes $\pi_1,\pi_2\in\tangsuby$ have an isotropic line $L$ in common.
  Let 
    $a_3,a_4 \in L$, $a_3,a_4 \neq \Rad(\pi_1),\Rad(\pi_2)$, and $a_3 \neq a_4$.
  Then there are 
    $a_i\in\pi_i\setminus L$ such that $a_1\not\perp a_2$
    and $a_1,a_2 \not\perp a_3,a_4$, so 
  $a_1,a_2,a_3$ and $a_1,a_2,a_4$ span two planes $\pi'_1,\pi'_2\in\tangsuby$ with common 
  regular line $M = a_1 + a_2$.
  {\normalfont By the above, $\pi'_1 \blisk \pi'_2$.}
  \begin{proof}
    From assumption, $L\perp L$.
    Write $p_i = \Rad(\pi_i)$. Then $p_1,p_2 \in L$. 
    Let $a_1 \in \pi_1\setminus L$  be arbitrary. 
    Then $p_1 \perp a_1$; 
    if $a_1 \perp a_3$ or $a_1 \perp a_4$ then  $a_1 \perp L$ and thus $a_1 \perp \pi_1$. 
    As above, we find 
    $a_2 \in \pi_2\setminus L$ with $a_1\not\perp a_2$ and we are through.
  \end{proof}
\item[\strut]
  \begin{normalfont}
  Write $\pi_1 \bliskx \pi_2$ iff there are planes $\pi'_1, \pi'_2$ as in \eqref{copol:6}.
  \par
  In view of \eqref{copol:5} and \eqref{copol:6}, we obtain
  \end{normalfont}
\item\label{copol:7}
  Let $\pi_1,\pi_2$ be two tangential planes. If 
    $|[\pi_1]\cap[\pi_2]|\geq 2$ then 
    $\pi_1 \blisk \pi_2$ or $\pi_1 \bliskx \pi_2$.
\item\label{copol:8}
  Any two planes in $\tangsuby$ can be joined by a sequence $(\pi_i\colon i=0,\dots, t)$ 
  of tangential planes
  such that 
    $|[\pi_{i-1}]\cap[\pi_i]|\geq 2$ for $i=1,\dots, t$.
  %
  \begin{proof}
    By \eqref{copol:1}, any two points $a',a''$ can be joined in \fixprojr \ by a
    path (sometimes also called a polygonal path, which by the way is of length $\leq 2$ here).
    Let $a_0,\dots, a_t$ be such a path in $\fixprojr$ that joins $a',a''$
    and let $L_i$ be the line through $a_{i-1},a_{i}$ for $i=1,\dots, t$.
    By \eqref{copol:4}, any two consecutive sides $L_{i},L_{i+1}$ lie in a 
    plane $\pi_i\in\tangsuby$, $i=1,\dots, t-1$.
    Let $\pi',\pi''\in\tangsuby_2$; take regular lines 
    $L'$ in $\pi'$ and $L''$ in $\pi''$ and points $a'$ on $L'$, $a''$ on $L''$.
    Taking 
      $\pi_0 = L'+L_1$ if $L'$ is not on $\pi_1$ and $\pi_0 = \pi_1$ otherwise,
    and similarly, 
      $\pi_t = L_t+L''$ or $\pi_t = \pi_{t-1}$
    we obtain a desired sequence of planes.
  \end{proof}
\item\label{copol:9}
  For each point $p$ of $\fixprojr$ there is a tangential plane $\pi$ such that 
  $p \in [\pi]$.
  \begin{proof}
    First, we take any point $q$ such that $q\not\perp p$ and let $L$ be a line
    through $p,q$. Next, let $\pi$ be a plane that contains $L$. Then $L$ is regular,
    so $\pi$ is tangential. By \eqref{copol:3}, $L$ misses $\Rad(\pi)$
    and thus $p \neq \Rad(\pi)$.
  \end{proof}
\item[\strut]
  \begin{normalfont}
  Let us write $\blisq$ for the transitive closure of the relation
  $\blisk \cup \bliskx$.
  By \eqref{copol:7}, \eqref{copol:8}, and \eqref{copol:9} we obtain
  \end{normalfont}
\item\label{copol:10}
  Let $\Delta$ be any triangle of $\fixprojr$. Then 
  the set of points of\/ $\fixprojr$ is the union
  \begin{equation}\label{eq:rozpiecie:1}
    \widetilde{\Delta} = \bigcup\left\{ \overline{\Delta'} \colon \;\;
    \overline{\Delta'}\blisq \overline{\Delta},\;
    \Delta' \text{ is a triangle} \right\}.
  \end{equation}
\item[\strut]
  \begin{normalfont}
  Let $a_1,a_2$ be points of \fixprojr.
  Write $a_1 \adjac a_2$ when $a_1,a_2$ are collinear in \fixprojr.
  In view of \eqref{copol:1}, 
  {\em the orthogonality of points is definable in \fixprojr:
  $a_1 \perp a_2$ iff $a_1 = a_2$ or $a_1\nadjac a_2$}.
  \end{normalfont}

\item\label{copol:12}
  Let\/ $a_1,a_2,b_1,b_2$ be four points of\/ $\fixprojr$ such that
    $a_1\adjac a_2$, $b_1\nadjac b_2$, and $a_1,a_2\adjac b_1,b_2$
  (cf. \eqref{copol:6}).
  Then there are points $c_1,c_2$ such that 
    $c_1\nadjac c_2$  and $a_1,a_2,b_1,b_2 \adjac c_1,c_2$.
  \begin{proof}
    Let $L$ be the line through $a_1,a_2$.
    Suppose, first, that $b_1,b_2$ are on a plane $\pi$ through $L$.
    Then $\pi$ is tangential; let $p = \Rad(\pi)$, so the line $M$ through $b_1,b_2$
    is isotropic and it passes through $p$. Clearly, $b_1,b_2\neq p$, as the lines 
    $\LineOn(b_1,a_i)$ and $\LineOn(b_2,a_i)$ are not isotropic.
    We take any line $M_0 \neq M$ through $p$ that misses $a_1,a_2$.
    Any two  points $c_1, c_2$ on $M_0$ that are distinct from $p$ and not lie on $L$
    satisfy our requirements.
    \par
    If there is no plane through $L$ which contains $b_1,b_2$ then
    the lines $L$ and $M$ are skew and they span a projective $3$-space $\Gamma$
    (linearly, $\dim(\Gamma) = 4$); 
    as $\Gamma$ contains $L$, $\rdim(\Gamma)\leq 2$, and 
    as $\Gamma\diagup\Rad(\Gamma)$ is nondegenerate $2 \dzieli (4 - \rdim(\Gamma))$. 
    Consequently, there are two cases to consider.
    Firstly, if $\Rad(\Gamma)$ is a line $K$ then isotropic lines in $\Gamma$ are exactly the 
    projective lines which cross $K$; in particular, $M$ crosses $K$ in a point $p$.
    Let $\pi_i = K + a_i$ for $i =1,2$; 
    then $M$ is neither contained in $\pi_1$ nor in $\pi_2$.
    Let $\pi$ be a plane that contains $M$ and does not contain $K$; then 
    $\pi\cap K = p$. Let $M_0$ be other line in $\pi$ that passes through $p$
    and is not contained in $\pi_1,\pi_2$.
    For each $i=1,2$ there is at most one point on $M_0$ which is not collinear
    with $a_i$ (as otherwise, $M_0$ is contained in $\pi_i$).
    Thus one can find on $M_0$ two points $c_1,c_2$ distinct from $p$
    and collinear with $a_1,a_2$. 
    From construction, 
      $b_1,b_2 \adjac c_1,c_2$ and $b_1\not\adjac c_2$.
%
    Secondly, let $\Gamma$ be regular; then 
    $\ind(\xi\restriction{\Gamma})\leq 2$, so
    each plane in $\Gamma$ is tangential. 
    Let 
    $\pi_1$ be the plane spanned by $a_1,a_2,b_2$, 
    $M$ be a line through $p = \Rad(\pi_1)$ contained in $\pi_1$ and  missing $a_1,a_2,b_2$,
    $\pi_2$ be the plane through $M,b_1$, and 
    $q = \Rad(\pi_2)$. 
    Since $M\subset \pi_2$ is isotropic, by \eqref{copol:3}, $q \subset M$. 
    Take $c_1,c_2$ on $M$ distinct from $p,q$; then $c_1\not\adjac c_2$.
    By \eqref{copol:3}, $c_1,c_2 \adjac a_1,a_2,b_1,b_2$.
  \end{proof}

\item\label{copol:13}
  Let $a_1,a_2,a_3$ be a triangle in\/ $\fixprojr$.
  There are points $b_1,b_2,b_3$ such that 
    $a_1,a_2,a_3\adjac b_1,b_2,b_3$,\; 
    $b_1\not\adjac b_2,b_3$, and\/
    $b_2\adjac b_3$.
  \begin{proof}
    Let $\pi_1$ be the plane spanned by $a_1,a_2,a_3$. Then $\pi_1\in\tangsuby_3$
    and one can find $\Gamma\in\regsuby_4$ with $\pi\subset \Gamma$. 
    Let us restrict to  $\Gamma$ considered as a $3$-dimensional symplectic 
    projective space.
    As above, each plane contained in $\Gamma$ is tangential. 
    One can complete in $\Gamma$ given triangle to a tetrahedron
    $a_1,a_2,a_3,b_3$. 
    Let us take on $\pi_1$ a line $L$ through $p = \Rad(\pi_1)$
    which misses $a_1,a_2,a_3$; 
    let $\pi_2$ be the plane through $L$, $b_3$,
    and let $q = \Rad(\pi_2)$. 
    $\pi_2$ contains an isotropic line $L$, so $L$ contains $q$. 
    Suppose that $p = q$; 
    then $p\perp\pi_1,\pi_2$ gives $p\perp \Gamma$, a contradiction. 
    Set $b_1 := q$. Then $b_1\adjac a_1,a_2,a_3$.
    Let $b_2$ be a point on $L$ distinct from $p,q$. 
    By \eqref{copol:3}, 
    considering  $\pi_1$ we get 
      $b_2 \adjac a_1,a_2,a_3$, $b_1\not\adjac b_2$,
    and considering $\pi_2$ we get 
      $b_2\adjac b_3$, $b_3\not\adjac b_1$.
  \end{proof}
\end{sentences}
\end{fact}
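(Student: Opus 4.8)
The plan is to reduce this statement about chains of tangential planes to the elementary path–connectivity of the copolar space $\fixprojr$, using two structural observations. First, any projective plane $\pi$ containing a regular line $L$ is tangential: since $\dim(\pi)=3=\dim(L)+1$, the line $L$ is a regular hyperplane of $\pi$, whence $\pi\in\tangsuby$ by \ref{fct:reghipcia}. Second, by \eqref{copol:3} a regular line on a tangential plane misses the radical point of that plane; so if a regular line $L$ lies in two tangential planes $\pi_1,\pi_2$, then $L\subseteq[\pi_1]\cap[\pi_2]$, and as a projective line carries at least three points this already yields $|[\pi_1]\cap[\pi_2]|\geq 2$. It therefore suffices to connect the two given planes by a chain of tangential planes whose consecutive members share a common regular line.

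First I would show that $\fixprojr$ is path connected. By \eqref{copol:1} distinct points $a',a''$ are adjacent exactly when $a'\not\perp a''$; and if $a'\perp a''$, then since ${a'}^{\perp}$ and ${a''}^{\perp}$ are two distinct projective hyperplanes which cannot cover $\fixproj$ (note $n\geq 4$ whenever tangential planes exist), one can pick a point $c$ with $c\not\perp a',a''$, giving a path $a'\adjac c\adjac a''$. Hence any two points are joined by a path $a_0,\dots,a_t$ of pairwise adjacent consecutive points, and each $L_i:=a_{i-1}+a_i$ is then a regular line.

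Next I would convert such a point path into a plane chain. Consecutive regular lines $L_i,L_{i+1}$ meet in $a_i$, and when distinct they span a plane $\pi_i:=L_i+L_{i+1}$, which is tangential by the first observation, with $\pi_{i-1}\cap\pi_i\supseteq L_i$ and hence $|[\pi_{i-1}]\cap[\pi_i]|\geq 2$ by the second. To graft the prescribed end planes $\pi',\pi''$ onto this chain, I pick regular lines $L'\subset\pi'$, $L''\subset\pi''$ (available because, by \eqref{copol:3}, any line of a tangential plane avoiding its radical point is regular) and take the path endpoints $a'\in L'$, $a''\in L''$; inserting at the front the plane $L'+L_1$ (tangential, as it contains the regular line $L'$), which meets $\pi'$ along $L'$ and meets $\pi_1$ along $L_1$, and symmetrically inserting $L_t+L''$ at the back, completes the desired sequence.

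The connectivity step and the shared–regular–line observation are routine; the main obstacle is the bookkeeping at the two ends together with the degenerate incidences. I expect to have to treat separately the cases $L_i=L_{i+1}$ (three projectively collinear path points), a coincidence such as $L'=L_1$ or $L'\subset\pi_1$ (where the grafting plane degenerates and $\pi'$ is joined to $\pi_1$ directly along $L'$), and a short path with $t\leq 1$. Each such case is resolved by collapsing or omitting a plane rather than by any new idea, but all of them must be enumerated so that a bona fide sequence $(\pi_i\colon i=0,\dots,t)$ with $|[\pi_{i-1}]\cap[\pi_i]|\geq 2$ is produced in every configuration.
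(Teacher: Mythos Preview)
Your proposal is correct and follows essentially the same route as the paper's proof of \eqref{copol:8}: reduce to path-connectedness of $\fixprojr$, span consecutive regular edges to obtain tangential planes, and graft the two prescribed planes onto the ends via auxiliary planes $L'+L_1$ and $L_t+L''$, collapsing these when a coincidence occurs. The only cosmetic differences are that you invoke \ref{fct:reghipcia} directly (where the paper cites \eqref{copol:4}) to see that a plane containing a regular line is tangential, and you spell out explicitly the observation that a common regular line forces $|[\pi_{i-1}]\cap[\pi_i]|\geq 2$, which the paper leaves implicit.
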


Now we are going to reconstruct the Grassmann space $\PencSpace(\symsuby,k)$ 
in terms of binary adjacency $\badjac$. This involves the family of maximal
cliques of $\badjac$, i.e. stars and tops, and we shall show that they are
definable and distinguishable in the first place. 
Our reasoning depends on whether $k$ is even or odd.

\subsection{Case $2\ndzieli k$}

Let 
  $H \in\regsuby_{k-1}$, $H \subset B\in\regsuby_{k+1}$.

Let 
  $U\in\astarof(H)$ i.e. let $H\subset U\in\Sub_k({\field V})$.
From \ref{fct:reghipcia}, $\rdim(U) = 1$. 
Consequently,
\begin{ctext}
  $\starof(H) = \astarof(H)$.
\end{ctext}
Let $q = \Rad(U)$, then $q \perp U$;
in particular, $q \perp H$.
Since $H\in\regsuby$, $q \notin H$ and thus $U = H \oplus q$ with $q \in H^\perp$.
Consequently, an element of $\starof(H)$  
is any subspace $U$ of the form as above i.e.
\begin{ctext}
  $\starof(H) = \left\{ H+q\colon q \text{ is a point}, \;
  q \subset H^\perp \right\}$.
\end{ctext}
Thus the elements of $\starof(H)$ can be identified with the elements
of $\Sub_1(H^\perp)$ i.e. with the points of the nondegenerate metric projective
space defined over $H^\perp$.

Analogous remarks concern the geometry of 
$\topof(B) = \atopof(B)$. 
In particular, we can write
\begin{ctext}
  $\topof(B) = \left\{ B\cap q^\perp\colon q \text{ is a point on } B \right\}$.
\end{ctext}
From the above we get:
\begin{efekt}\label{krok:peki1}
  if $H\subset B$ then $\starof(H)\cap\topof(B) = \apek(H,B)$, so
  it contains at least two elements.
\end{efekt}

One can note that 
\begin{ctext}
  $\pek(H,B)
  = \left\{ H\oplus\gen{u}\colon u \in P,\; u\neq\theta \right\}$,
  where $P = H^\perp \cap B$, and $P\in\regsuby_2$.
\end{ctext}

The form $\xi$ restricted to $H^\perp$ is symplectic, so
the points of the metric projective space $\starof(H)$
are simply the points of the 
corresponding projective space $\PencSpace(H^\perp,1)$; the distinction lies in
the adopted family of lines.
These lines correspond to suitable pencils i.e. to $B\in\regsuby_{k+1}$ with $H\subset B$.
We see that the $B$ above correspond to regular $2$-subspaces of $H^\perp$
and thus 
\begin{efekt}\label{krok:1}
  the geometry of the restriction of\/ 
  $\PencSpace(\symsuby,k)$ to\/ $\starof(H)$ is 
  an $(n-k)$-dimensional symplectic copolar space.
\end{efekt}
With analogous reasoning we obtain that 
\begin{efekt}\label{krok:2}
  the restriction of\/ 
  $\PencSpace(\symsuby,k)$ to\/ $\topof(B)$ is a $k$-dimensional symplectic 
  copolar space.
\end{efekt}

Let $\Delta$ be a triangle in $\PencSpace(\symsuby,k)$.
Then, by common projective geometry, its vertices are either in a top $\topof(B) =:{\cal X}$
or in a star $\starof(H) =: {\cal X}$. Clearly, $\overline{\Delta}$, as defined in 
\ref{fct:copol}\eqref{copol:4} with $\fixprojr$ replaced by $\PencSpace(\symsuby,k)$, 
is a plane in $\cal X$.
Note that if $\Delta'$ is a different triangle in $\PencSpace(\symsuby,k)$
and 
  $\overline{\Delta'} \blisk \overline{\Delta}$ or 
  $\overline{\Delta'} \bliskx \overline{\Delta}$
(cf. \ref{fct:copol}), 
then 
$\Delta'$ lies in $\cal X$ as well.
From the above, \eqref{krok:1}, \eqref{krok:2},
and \ref{fct:copol}\eqref{copol:10}, we get that
\begin{ctext}
  $\widetilde{\Delta} = {\cal X}$,
\end{ctext}
$\widetilde{\Delta}$ being the set defined in $\PencSpace(\symsuby,k)$ by formula \eqref{eq:rozpiecie:1}.
Finally, we get that 
\begin{efekt}\label{krok:3}
  the family of stars and tops 
  is definable in terms of geometry of $\PencSpace(\symsuby,k)$;
  by \eqref{krok:1} and \eqref{krok:2},
  if $k \neq n-k$, stars and tops are intrinsically distinguishable.
\end{efekt}

\medskip
Let $U_1,U_2$ be distinct points of $\PencSpace(\symsuby,k)$ such that
  $U_1 \badjac U_2$,
and let 
  $U_1,U_2 \in g = \pek(H,B)\in\peki_k(\symsuby)$.
Next, let $W_1, W_2$ satisfy 
\begin{ctext}
  $W_1, W_2\badjac U_1,U_2$\quad and\quad $W_1\not\badjac W_2$.
\end{ctext}
Consider the set
\begin{ctext}
  $\agen{U_1, U_2, W_1, W_2}{\badjac} := \bigl\{ U\colon U\badjac U_1, U_2, W_1, W_2 \bigr\}$.
\end{ctext}
From common projective geometry, as $\badjac$-neighbor subspaces are adjacent
in the projective Grassmannian over \fixproj,
there are three possibilities to consider
\begin{itemize}\def\labelitemi{--}\itemsep0pt
\item
  $W_1, W_2\in\starof(H)$. 
  By \ref{fct:copol}\eqref{copol:12} and \eqref{krok:1}, there are 
    $U, W\in\agen{U_1, U_2, W_1, W_2}{\badjac}$ 
  such that 
    $U\not\badjac W$.

\item
  $W_1, W_2\in\topof(B)$. 
  Analogously, there are 
    $U, W\in\agen{U_1, U_2, W_1, W_2}{\badjac}$
  such that 
    $U\not\badjac W$.

\item
  $W_1\in\topof(B)\setminus\starof(H)$ and $W_2\in\starof(H)\setminus\topof(B)$
  or 
  $W_2\in\topof(B)\setminus\starof(H)$ and $W_1\in\starof(H)\setminus\topof(B)$.
  In that case 
    $\agen{U_1, U_2, W_1, W_2}{\badjac}\subset \pek(H,B)$
  and thus 
    $U\badjac W$ for all $U, W\in\agen{U_1, U_2, W_1, W_2}{\badjac}$.
\end{itemize}
Let $U_3\in g$ be arbitrary; clearly, one can find 
  $W_1\in\topof(B)\setminus\starof(H)$,
  $W_2\in\starof(H)\setminus\topof(B)$ 
such that 
  $U_1,U_2,U_3\badjac W_1, W_2$.
Write $\wspolin$ for the ternary collinearity relation of 
$\PencSpace(\symsuby,k)$. In view of the above analysis, the formula
\begin{multline}\label{def:adjac2col}
  \wspolin(U_1, U_2, U_3) \iff U_1 \badjac U_2 \Land (\exists{W_1, W_2})\,
  \Bigl[\;
  W_1, W_2\badjac U_1,U_2 \Land W_1\not\badjac W_2 \Land 
  \\
  (\forall{U, W})\,
  \bigl[\, U_1, U_2, W_1, W_2\badjac U, W \implies U\badjac W \,\bigr]
  \Land U_3 \badjac U_1, U_2, W_1, W_2
  \;\Bigr]
\end{multline}
defines the relation $\wspolin$ in terms of the adjacency $\badjac$
for distinct $U_1,U_2$.
Finally, we conclude that 
\begin{efekt}\label{krok:4}
  $\PencSpace(\symsuby,k)$ is definable in terms of the adjacency $\badjac$.
\end{efekt}

\subsection{Case $2 \dzieli k$}

Let $H \in\tangsuby_{k-1}$, $B\in\tangsuby_{k+1}$.
Set $p = \Rad(H)$ and $q = \Rad(B)$.

Note that $U\in\Sub_k(B)$ is regular iff $U$ is a linear complement of $q$ in $B$
i.e. iff $B = q \oplus U$.
This can be written as
\begin{equation}\label{eq:topy2}
  \topof(B) = \left\{ U\in\regsuby_k\colon U\subset B \right\} =
  \left\{ U\in\Sub_k(B)\colon q \not\subset U \right\}.
\end{equation}
It is easy to note that
\begin{equation}\label{eq:gwiazdy2}
  \starof(H) = \left\{ U\in\regsuby_k\colon H\subset U \right\}
  = \left\{ H+\gen{u}\colon u \notin p^\perp \right\}.
\end{equation}
Indeed, let $U = H\oplus\gen{u}$ with $u\neq\theta$.
Suppose that $p \perp u$; then $p \perp U$ and thus $U\notin\regsuby$.
Conversely, let $w\in\Rad(U)$, $w\neq\theta$. Then $w \in U$, $w \perp U$, so $w \perp p$.
If $w\notin H$ then $p\perp H+\gen{w} = U$ and thus $p\perp u$.
If $w \in H$ then $w\perp U\supset H$ gives $w = p$; 
we get $p\perp U$ and thus $p\perp u$.

Assume that  $H \subset B$. 
There are two cases to consider
\begin{sentences}\leftmargin0pt\itemindent13ex
\item[$q \in H$:] 
  Then $q \perp B\supset H$ gives $p=q$.
  For arbitrary $U\in\apek(H,B)$ we have then $p\in \Rad(U)$ and thus
  %
    $\pek(H,B) = \emptyset$.
\item[$q \notin H$:]
  Then $p\neq q$. Set 
    $L := p + q$ and $U_0 := H + q = H + L$. 
  Note that $L\perp L$,
  so $L\in\Quadr_2$; moreover, $L\perp H$. 
  From this we obtain $\Rad(U_0) = L$
  (in particular, $\rdim(U_0) = 2$).
  We have
  %
    $\pek(H,B) = \apek(H,B)\setminus\left\{ U_0 \right\}$.
  %
  Indeed, let $U\in\apek(H,B)$ and $U \neq U_0$. Then $q \notin U$ and, 
  by \eqref{eq:topy2},
    $U\in\topof(B)$ and thus $U\in\pek(H,B)$.
\end{sentences}
From the above we have the following:
\begin{efekt}\label{krok:peki2}
  if $H \subset B$ then $\starof(H)\cap\topof(B)$ is either empty or it 
  contains at least two elements.
\end{efekt}

Let $U_1,U_2 \in\regsuby_k$. 
Assume that 
  $U_1\cap U_2\in\Sub_{k-1}({\field V})$.  
Then 
  $U_1 + U_2 \in \Sub_{k+1}({\field V})$; 
in view of \ref{fct:reghipcia}, 
  $U_1\cap U_2, U_1 + U_2 \in \tangsuby$ 
and thus
  $U_1,U_2\in\pek({U_1\cap U_2},{U_1+U_2})$.
Therefore, 
\begin{efekt}\label{krok:5}
  the binary collinearity in $\PencSpace(\symsuby,k)$ 
  coincides with the projective adjacency on $\regsuby_k$.
\end{efekt}

Next, note that 
\begin{efekt}\label{krok:6}
  the geometry of restriction of\/ 
  $\PencSpace(\symsuby,k)$ to a top and to a star is a 
  $k$-di\-men\-sio\-nal and a $(n-k)$-dimensional resp., affine geometry.
\end{efekt}
Indeed,
\begin{sentences}\def\labelitemi{--}\itemindent5ex\leftmargin0pt
\item[\labelitemi]
  The family ${\cal X} := \astarof(H)$ has the natural structure of a projective space.
  The set $\starof(H)$ is obtained by removing the segment 
  $[H,p^\perp]_k = \left\{ U\in\sub_k({\field V}) \colon H \subset U \subset p^\perp \right\}$ 
  from $\cal X$ (cf. \eqref{eq:gwiazdy2}). 
  Computing dimensions we see that $[H,p^\perp]_k$ is a hyperplane in $\cal X$,
  which justifies our claim. The induced affine geometry has dimension $n-k$.
\item[\labelitemi]
  By \eqref{eq:topy2}, 
  $\topof(B)$ consists of the hyperplanes in the projective space $\atopof(B)$ which omit
  a point $q$; this procedure results in a $k$-dimensional affine space.
\end{sentences}

\medskip

Let us note a straightforward consequence of \eqref{krok:6} that
%
%
%
%
\begin{efekt}\label{krok:9}
  stars and tops are distinguishable in terms of $\badjac$ provided
  $k \neq n-k$.
\end{efekt}

\medskip
The sets $\starof(H)$ and $\topof(B)$ both are strong subspaces of 
$\PencSpace(\symsuby,k)$; in particular, they are cliques of the binary collinearity
relation $\badjac$.
Moreover, they are exactly  {\em the maximal} cliques of $\badjac$.
This yields that 
\begin{efekt}\label{krok:7}
  the family of stars and tops is definable in
  terms of the adjacency $\badjac$ on $\symsuby_k$.
\end{efekt}
An intersection of two maximal $\badjac$-cliques has at least two elements
iff one of these cliques is a star and another one is a top, and then their
intersection is a pencil i.e a line of $\PencSpace(\symsuby,k)$.
This justifies that 
\begin{efekt}\label{krok:8}
  the structure $\PencSpace(\symsuby,k)$
  is definable in terms of the adjacency $\badjac$.
\end{efekt}

\subsection{Adjacencies $\botadjac$ and $\topadjac$ on $\tangsuby$}

As we already stated, if $2\dzieli k$, then all the three adjacency relations
$\badjac$, $\botadjac$, and  $\topadjac$ coincide on $\symsuby_k = \regsuby_k$.
In case $k$ is odd we need more elaboration to get the Grassmann space
$\PencSpace(\symsuby,k)$ defined in terms of $\botadjac$, as well as in terms
of $\topadjac$.

Let 
  $2 \ndzieli k$ and $1 < k < n-1$. 
We begin our analysis with 
  $\botadjac$ on $\symsuby_k = \tangsuby_k$.
Note that if $U\botadjac W$ then 
$U, W$ are adjacent in the projective Grassmannian over \fixproj.

Let $U_1,U_2,U_3$ be a $\botadjac$-clique i.e. let them be pairwise distinct
and $U_i \botadjac U_j$ for distinct $i,j =1,2,3$.
Consider the set 
\begin{ctext}
  $\agen{U_1,U_2,U_3}{\botadjac} = \left\{ U \colon U\botadjac U_1,U_2,U_3 \right\}$.
\end{ctext}
By known properties of projective Grassmannians the following possibilities arise.
\begin{sentences}
\item\label{moze1}
  $U_1, U_2, U_3$ are in a projective pencil $g = \apek(H,B)$; then $H \in \regsuby_{k-1}$.
\item\label{moze2}
  \eqref{moze1} fails, but $U_1,U_2,U_3 \in \astarof(H)$ for some $H$; 
  then $H\in\regsuby_{k-1}$. 
  We say that $U_1, U_2, U_3$ yield a $\stars$-triangle.
\item\label{moze3}
  \eqref{moze1} fails, but $U_1,U_2,U_3 \in \atopof(B)$ for some $B$. Since $B$ contains
  a regular subspace $U_1 \cap U_2$ by \ref{fct:reghipcia} we get $\rdim(B)\leq 2$
  and thus two cases are possible:
  \begin{enumerate}[(a)]\itemsep-2pt
  \item\label{moze3a}
    $B \in \regsuby_{k+1}$, 
    we say that $U_1, U_2, U_3$ yield a $\tops$-triangle; or
  \item\label{moze3b}
    $\Rad(B)$ is a line $L$, 
    we say that $U_1, U_2, U_3$ yield a $\topsx$-triangle. 
  \end{enumerate}
\end{sentences}
In case \eqref{moze2} 
{\em the set $\agen{U_1, U_2, U_3}{\botadjac}$ is a $\botadjac$-clique}.
Indeed, if 
  $U, W \in \agen{U_1, U_2, U_3}{\botadjac}$ 
are distinct then 
  $U, W\in\starof(H)$.

In case \eqref{moze3} 
{\em the set $\agen{U_1, U_2, U_3}{\botadjac}$ is not a $\botadjac$-clique}.
Consider \eqref{moze3b} first. 
A $(k-1)$-subspace $H$ of $B$ is regular iff $H$ misses $L$.
On the other hand, a $k$-subspace $U$ is tangential iff it crosses $L$ in a point
(which turns out to be its radical). 
Write $p_i = L\cap U_i$, then 
$p_1,p_2,p_3$ are pairwise distinct, as the intersections $U_i \cap U_j$ are regular.
Take any point $p$ on $L$ distinct from $p_1, p_2, p_3$ 
and two $W_1, W_2$ through $p$ which do not contain $L$; 
then 
  $W_1, W_2\botadjac U_1, U_2, U_3$ 
(the intersections $W_1\cap U_i$, $W_2\cap U_i$ miss $L$) 
but 
  $W_1\not\botadjac W_2$.
Note that if 
  $W_1, W_3 \in \agen{U_1, U_2, U_3}{\botadjac}$ and $W_1\not\botadjac W_3$
then $p \in W_3$ and thus $W_2\not\botadjac W_3$. 
Hence
{\em the relation $\not\botadjac$ is transitive on $\agen{U_1, U_2, U_3}{\botadjac}$}.

Let us pass to case \eqref{moze3a}. 
Then $U_1, U_2, U_3$ can be considered as points
of a suitable symplectic copolar space $\topof(B)= \atopof(B)$ 
and we can use known properties of symplectic projective geometry. 
So, let $\pi$ be the plane spanned in $\topof(B)$ by $U_1, U_2, U_3$. 
To justify that 
$\agen{U_1, U_2, U_3}{\botadjac}$ is not a $\botadjac$-clique it suffices to take 
on $\topof(B)$ any line through the radical of $\pi$ and missing $U_1, U_2, U_3$,
and distinct points $U, W$ on this line.
On the other hand 
by \ref{fct:copol}\eqref{copol:13}, 
%
one can find 
  $W_1, W_2, W_3\in\topof(B)$
such that 
  $U_1, U_2, U_3 \botadjac W_1, W_2, W_3$, \ $W_2\botadjac W_3$,
  and $W_1\not\botadjac W_2, W_3$.
This justifies that 
{\em the relation $\not\botadjac$ is not transitive on $\agen{U_1,U_2,U_3}{\botadjac}$}.

In case \eqref{moze1}, clearly, 
{\em the set $\agen{U_1, U_2, U_3}{\botadjac}$ is not a $\botadjac$-clique}. 
To see this it suffices to find 
  $U\in\starof(H)\setminus g$ and 
  $W\in\atopof(B)\setminus g$ with $U_1, U_2, U_3 \botadjac U, W$.
Moreover, from properties of the copolar space $\starof(H)$,
{\em one can find in $\starof(H)$ a $\botadjac$-clique $W_1, W_2, W_3$ 
which does not fall into case \eqref{moze1} such that 
  $U_1, U_2, U_3\in \agen{W_1, W_2, W_3}{\botadjac}$}.
Moreover, 
{\em if $B\in\regsuby_{k+1}$ then an analogous triple can be found in $\topof(B)$}.

In view of the above analysis, a $\botadjac$-clique $U_1, U_2, U_3$ forms a $\stars$-triangle iff 
$\agen{U_1, U_2 ,U_3}{\botadjac}$ is a $\botadjac$-clique, so
\begin{efekt}\label{krok:21}
  the class of $\stars$-triangles  can be distinguished in terms of $\botadjac$.
\end{efekt}
Now, $\tops$-triangles/$\topsx$-triangles  
can be characterized as $\botadjac$-cliques $U_1, U_2, U_3$ 
which are, firstly, not contained in any $\agen{W_1, W_2, W_3}{\botadjac}$ for 
a $\stars$-triangle $W_1, W_2, W_3$ and, secondly, such that 
${\cal X} = \agen{U_1, U_2, U_3}{\botadjac}$ is not a $\botadjac$-clique 
and $\not\botadjac$ is not transitive/is transitive on $\cal X$.
Consequently,
\begin{efekt}\label{krok:22}
  the class of $\tops$-triangles and the class of $\topsx$-triangles
  can be distinguished in terms of $\botadjac$.
\end{efekt}
Let $\wspolin$ be the ternary collinearity relation of $\PencSpace(\symsuby,k)$.
Similarly as in \eqref{def:adjac2col} we can write 
\begin{multline}\label{def:botadjac2col}
  \wspolin(U_1,U_2,U_3) \iff (\exists{U'_1,U'_2,U'_3,U''_1,U''_2,U''_3})
  \bigl[\;
  U'_1,U'_2,U'_3 \text{ is a }\stars\text{-triangle} \Land
  \\
  U''_1,U''_2,U''_3 \text{ is a }\tops\text{-triangle} \Land
  U_1,U_2,U_3 \in \agen{U'_1,U'_2,U'_3}{\botadjac}\cap
  \agen{U''_1,U''_2,U''_3}{\botadjac}
  \;\bigr].
\end{multline}
By \eqref{krok:21}, \eqref{krok:22}, and \eqref{def:botadjac2col} we obtain that
\begin{efekt}\label{krok:23}
  the structure $\PencSpace(\symsuby,k)$ can be defined in terms of the
  adjacency $\botadjac$.
\end{efekt}

Clearly,  $\varkappa$ maps the relation $\botadjac$ on $\symsuby_k$
onto $\topadjac$ on $\symsuby_{n-k}$. Thus, \eqref{krok:23}  
with $k$ replaced by $n-k$ yields that
\begin{efekt}\label{krok:24}
  the structure $\PencSpace(\symsuby,k)$ can be defined in terms of the
  adjacency $\topadjac$.
\end{efekt}
Finally, \eqref{krok:23} and \eqref{krok:24} enable us to conclude with the following.
\begin{efekt}\label{krok:updown}
  The structure $\PencSpace(\symsuby,k)$ can be defined in terms of both
  $\botadjac$ and $\topadjac$ on $\tangsuby_k$. 
  Consequently, $\badjac$ can be defined in terms of $\botadjac$
  and in terms of $\topadjac$.
\end{efekt}

\subsection{Connectedness}

We use standard methods to show that automorphisms of binary adjacency
preserve, or exchange, the two families of its maximal cliques. These
methods rely on the fact that the adjacency in question is connected.

For a relation $\rho$ on $\symsuby_k$ we say that 
$U, W\in\symsuby_k$ are {\em $\rho$-connected} if there is a sequence
  $U = U_0,\dots, U_t = W$ 
such that 
  $U_{i-1}\mathrel{\rho} U_i$ or $U_{i-1} = U_{i}$ for $i=1,\dots, t$.
\par
By \ref{fct:copol}\eqref{copol:1} we easily get that 
\begin{efekt}\label{krok:con0}
  the binary collinearity of points of a symplectic copolar space is connected.
\end{efekt}
Let 
  $k \leq n-1$,
  $H\in\symsuby_{k-1}$, and $U, W\in\starof(H)$.
If $2 \dzieli k$ then by \eqref{krok:5}, $U\badjac W$.
If $2 \ndzieli k$ then, by \eqref{krok:1} and \eqref{krok:con0},
the points $U, W$ of $\starof(H)$ can be joined in $\starof(H)$ by 
a polygonal path. 
This gives the following.
\begin{efekt}\label{krok:con1}
  Let $U \botadjac W$. Then $U, W$ are $\badjac$-connected.
\end{efekt}
In a consequence, we get the following.
\begin{efekt}\label{krok:con2}
  Let $U, W\in\symsuby_k$  
  and $U\botadjac W$. 
  Then there are $\botadjac$-connected $B', B''\in\symsuby_{k+1}$
  such that $U\subset B'$ and $W\subset B''$.
\end{efekt}
Now, it is easy to prove that
\begin{efekt}\label{krok:con3}
  the conclusions of \eqref{krok:con1} and of \eqref{krok:con2} hold for any
  $\botadjac$-connected $U, W\in\symsuby_k$. 
\end{efekt}
Let $U, W\in\symsuby_k$. Consider
two flags 
  $U_1\subset\dots\subset U_k=U$ and 
  $W_1\subset\dots\subset W_k=W$ 
such that
  $U_i, W_i\in\symsuby_i$.
Note that if 
  $U_i\subset B'$, $W_i\subset B''$ and 
  $B',B''\in\symsuby_{i+1}$ 
then 
  $U_{i+1}\botadjac B'$ and $W_{i+1}\botadjac B''$, 
so 
if $B',B''$  are $\botadjac$-connected then
$U_{i+1}, W_{i+1}$ are $\botadjac$-connected as well.
Starting from \eqref{krok:con0} 
and applying, consecutively, \eqref{krok:con3} and the observation above 
to $U_i, W_i$, $i=1,\dots, k$,
by induction, we get that $U, W$ are $\badjac$-connected.
This proves that
\begin{efekt}\label{krok:con4}
  the relation $\badjac$ on $\symsuby_k$ is connected.
\end{efekt}

\subsection{Proofs of the results}\label{subsec:proofs}

Now we are able to complete the proofs of our theorems from Sec.~\ref{sec:notions}
by gathering together the facts proved above.
The reasoning is more or less typical for Chow type theorems and its crucial
step consists in proving that the adjacency structure $\struct{\symsuby_{k-1},\badjac}$ 
(as well as $\struct{\symsuby_{k+1},\badjac}$) is definable in 
$\struct{\symsuby_k,\badjac}$.


\def\baba{{\goth B}}
Let $a_1,a_2$ be points of \fixprojr \ such that $a_1\nadjac a_2$.
By known properties of symplectic polar spaces and \ref{fct:copol}\eqref{copol:1}, 
the set 
  $\bigl\{ p\colon (\forall q)\;[\, a_1,a_2\nadjac q\implies q \nadjac p \,] \bigr\}$
is the projective line through $a_1,a_2$. 
Thus the class of lines of \fixproj \ is definable in \fixprojr. 
Finally, 
\begin{efekt}\label{krok:start}
  the metric projective geometry $(\fixproj,\perp)$ is definable
  in \fixprojr.
\end{efekt}
\par
Fix $k$ with $1 < k < n-1$.
Recall that, due to \eqref{krok:4} and \eqref{krok:8}, 
\begin{efekt}\label{krok:30}
   the two structures
     $\fixprojr_k := \PencSpace(\symsuby,k)$ and 
     $\baba_k := \struct{\symsuby_k,\badjac}$ 
   are mutually definable and consequently $\Aut(\fixprojr_k) = \Aut(\baba_k)$.
\end{efekt}
Let $\stars_k$ be the family of all the stars and $\tops_k$
be the family of all the tops in $\fixprojr_k$.
By \eqref{krok:3}, \eqref{krok:4}, and \eqref{krok:7},
in both cases $2 \dzieli k$ and $2 \ndzieli k$ the class
  $\stars_k \cup \tops_k$
is definable in  $\baba_k$ 
and thus it remains invariant under automorphisms of $\baba_k$.

\begin{lem}\label{lem:mainbis}
  Assume that stars and tops in $\PencSpace(\symsuby,k)$ are distinguishable.
  \begin{sentences}
  \item\label{lem:mainbis:a}
    The families $\symsuby_{k-1}$ and $\symsuby_{k+1}$ are definable in $\baba_k$.
	
  \item\label{lem:mainbis:c}
	If\/ $k > 1$ then  $\PencSpace(\symsuby,k-1)$ 
	can be defined in terms of the binary adjacency relation $\badjac$
	on $\symsuby_k$ and consequently, it can be defined in terms of\/ $\PencSpace(\symsuby,k)$.
	If\/ $k < n-1$ then $\PencSpace(\symsuby,k+1)$
	can be defined in terms of the binary adjacency relation $\badjac$
	on $\symsuby_k$ so, it can be defined in terms of\/ $\PencSpace(\symsuby,k)$.  

  \item\label{lem:mainbis:b}
    Each automorphism $F$ of\/ $\fixprojr_k$ determines an automorphism
      $F^+$ of\/ $\fixprojr_{k+1}$
	and an automorphism 
      $F^-$ of\/ $\fixprojr_{k-1}$ 
	such that\/
      $U\in\starof(H)$ iff\/ $F(U)\in\starof(F^-(H))$ 
	and\/
      $U\in\topof(B)$ iff\/ $F(U)\in\topof(F^+(B))$ 
	for all 
      $H\in\symsuby_{k-1}$, $U\in\symsuby_k$, and\/ $B\in\symsuby_{k+1}$.
  \end{sentences}
\end{lem}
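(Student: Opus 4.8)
The plan is to deduce part \ref{lem:mainbis:b} formally from the definability asserted in parts \ref{lem:mainbis:a} and \ref{lem:mainbis:c}, so that the real work lies in \ref{lem:mainbis:c}. Throughout I identify a star $\starof(H)$ with the subspace $H\in\symsuby_{k-1}$ that determines it and a top $\topof(B)$ with $B\in\symsuby_{k+1}$; these correspondences are bijective, since $H=\bigcap\starof(H)$ and $B=\sum\topof(B)$ recover $H$ and $B$. By \eqref{krok:3} and \eqref{krok:7} the families of stars and of tops are $\badjac$-definable, and by hypothesis they are distinguishable; hence, via the above identifications, $\symsuby_{k-1}$ and $\symsuby_{k+1}$ are definable in $\baba_k$, which is part \ref{lem:mainbis:a}.

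For part \ref{lem:mainbis:c} I must express collinearity of distinct $H_1,H_2\in\symsuby_{k-1}$ in $\PencSpace(\symsuby,k-1)$ through $\baba_k$ alone; the case of $\symsuby_{k+1}$ is its $\varkappa$-dual. A line of $\PencSpace(\symsuby,k-1)$ joining $H_1,H_2$ must be the pencil $\pek(H_1\cap H_2,H_1+H_2)$, so collinearity is equivalent to $H_1+H_2\in\symsuby_k$ together with $H_1\cap H_2\in\symsuby_{k-2}$. The first condition says precisely that $\starof(H_1)\cap\starof(H_2)\neq\emptyset$ (and then the intersection is the single point $H_1+H_2$), which is plainly definable in $\baba_k$. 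When $2\ndzieli k$ the second condition is automatic: $H_1$ is then regular and $H_0:=H_1\cap H_2$ is its hyperplane, hence tangential by \ref{fct:reghipcia}, so $H_0\in\tangsuby_{k-2}=\symsuby_{k-2}$. Thus for odd $k$ collinearity in $\PencSpace(\symsuby,k-1)$ is just ``the two stars meet'', which is definable; the dual argument (now using that a subspace containing a regular hyperplane is tangential) settles $\PencSpace(\symsuby,k+1)$.

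The hard part, and what I expect to be the main obstacle, is $2\dzieli k$. Here $B_0:=H_1+H_2\in\regsuby_k$ and every hyperplane of $B_0$ is tangential, so $H_1,H_2\in\tangsuby_{k-1}$ automatically; writing $r_i:=\Rad(H_i)$ one has $H_i=r_i^\perp\cap B_0$, whence $H_0=H_1\cap H_2=\gen{r_1,r_2}^\perp\cap B_0$ has the same radical as the line $\gen{r_1,r_2}$. Therefore $\rdim(H_0)\in\{0,2\}$, and $H_0\in\symsuby_{k-2}$ (i.e. $\rdim(H_0)=0$) iff $r_1\not\perp r_2$. Consequently the combinatorial datum ``the stars meet'' no longer decides collinearity: both collinear and non-collinear meeting pairs occur and cannot be separated by intersection patterns of stars, since the stars of any two distinct hyperplanes of $B_0$ already meet in the single point $B_0$. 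The plan is to recover the missing metric datum, the conjugacy of the radicals $r_1\perp r_2$, from the geometry of $\PencSpace(\symsuby,k)$ localized at the common vertex $B_0$: by \eqref{krok:6} each star is an affine space whose hyperplane at infinity encodes its radical, and, arguing in the residue of $B_0$ in the spirit of the recovery of $(\fixproj,\perp)$ from the copolar space in \eqref{krok:start}, this conjugacy becomes $\badjac$-definable. This yields collinearity in $\PencSpace(\symsuby,k-1)$, hence the definability of $\baba_{k-1}$, and then of $\PencSpace(\symsuby,k-1)$ by \eqref{krok:30}; the case $k+1$ follows dually.

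Finally, part \ref{lem:mainbis:b}. By \eqref{krok:30} every automorphism $F$ of $\fixprojr_k$ is an automorphism of $\baba_k$, so it preserves the definable family of maximal cliques and, the two kinds being distinguishable, maps stars to stars and tops to tops. Through the identifications of part \ref{lem:mainbis:a} it therefore induces bijections $F^-$ of $\symsuby_{k-1}$ and $F^+$ of $\symsuby_{k+1}$ characterised by $\starof(F^-(H))=F(\starof(H))$ and $\topof(F^+(B))=F(\topof(B))$; these identities are exactly the asserted equivalences $U\in\starof(H)\iff F(U)\in\starof(F^-(H))$ and $U\in\topof(B)\iff F(U)\in\topof(F^+(B))$. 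Since, by part \ref{lem:mainbis:c}, collinearity in $\PencSpace(\symsuby,k\mp1)$ is $\baba_k$-definable and $F$ preserves every $\baba_k$-definable relation, $F^-$ and $F^+$ preserve the respective collinearities and hence are automorphisms of $\fixprojr_{k-1}$ and $\fixprojr_{k+1}$.
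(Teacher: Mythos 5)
Your overall architecture --- identifying $H$ with $\starof(H)$ and $B$ with $\topof(B)$ to obtain \eqref{lem:mainbis:a}, reading ``$\starof(H_1)\cap\starof(H_2)\neq\emptyset$'' as the adjacency $\topadjac$ on $\symsuby_{k-1}$ (dually $\botadjac$ on $\symsuby_{k+1}$), and deducing \eqref{lem:mainbis:b} formally from \eqref{lem:mainbis:a} and \eqref{lem:mainbis:c} --- is exactly the paper's. Your treatment of the case $2\ndzieli k$ is also sound: there $H_1\cap H_2$ is automatically in $\symsuby_{k-2}$ by \ref{fct:reghipcia} together with a parity count, so $\topadjac$ coincides with collinearity on $\symsuby_{k-1}$ and \eqref{krok:30} finishes.

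The gap is the case $2\dzieli k$. You diagnose the obstruction correctly: for distinct hyperplanes $H_1,H_2$ of $B_0=H_1+H_2\in\regsuby_k$ the stars always meet in the single point $B_0$, while $\rdim(H_1\cap H_2)$ is $0$ or $2$ according as $\Rad(H_1)\not\perp\Rad(H_2)$ or $\Rad(H_1)\perp\Rad(H_2)$. But your resolution is only an announced plan (``arguing in the residue of $B_0$ \dots\ this conjugacy becomes $\badjac$-definable'') with no argument behind it; recovering the conjugacy of the radicals from the clique structure of $\baba_k$ is precisely the nontrivial content of this case, and it does not follow routinely from \eqref{krok:6} or from the method of \eqref{krok:start}. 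The paper closes exactly this hole by invoking \eqref{krok:updown}: for odd $k-1$ the whole structure $\PencSpace(\symsuby,k-1)$ is definable from $\topadjac$ on $\tangsuby_{k-1}$ alone --- a separately proved result resting on the classification of $\botadjac$-cliques of size three into $\stars$-, $\tops$- and $\topsx$-triangles via the transitivity test on $\not\botadjac$, dualized through $\varkappa$. Since you have already shown that $\topadjac$ on $\symsuby_{k-1}$ and $\botadjac$ on $\symsuby_{k+1}$ are definable in $\baba_k$, quoting \eqref{krok:updown} would complete your argument; as written, the even-$k$ half of \eqref{lem:mainbis:c}, and with it the corresponding half of \eqref{lem:mainbis:b}, is not established.
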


\begin{proof}
  \eqref{lem:mainbis:a}
  It is enough to see that we can identify
  $H\in\symsuby_{k-1}$ with $\starof(H)$ and 
  $B\in\symsuby_{k+1}$ with $\topof(B)$. 
  
  \eqref{lem:mainbis:c}
  Let 
    $H_1,H_2\in\symsuby_{k-1}$,  and $B_1,B_2\in\symsuby_{k+1}$.
  Clearly, 
    $H_1 \topadjac H_2$ iff $\starof(H_1)\cap\starof(H_2)\neq\emptyset$,
  and dually
    $B_1 \botadjac B_2$ iff $\topof(B_1)\cap\topof(B_2)\neq\emptyset$.
  By \eqref{lem:mainbis:a} the maps $\starof(.)$ and $\topof(.)$ are definable, so
  up to them, 
  the relations: $\topadjac$ on $\symsuby_{k-1}$ and $\botadjac$ on 
  $\symsuby_{k+1}$ are definable in $\fixprojr_k$.
  This together with \eqref{krok:30} when $2\dzieli(k-1)$, and together with
  \eqref{krok:updown} when $2\ndzieli(k-1)$, proves our statement.

  \eqref{lem:mainbis:b}
  By  \eqref{lem:mainbis:a} we have bijections $F^+$ of\/ $\symsuby_{k+1}$
  and $F^-$ of\/ $\symsuby_{k-1}$ such that $\topof(F^+(B)) = F(\topof(B))$ 
  and $\starof(F^-(H)) = F(\starof(H))$. Now by \eqref{lem:mainbis:c} these
  maps are automorphisms as required.
\end{proof}

\begin{proof}[Proof of Proposition \ref{prop:main}]
  The proposition follows directly from Lemma~\ref{lem:mainbis}\eqref{lem:mainbis:c}, 
  \eqref{krok:3}, and \eqref{krok:9}.
\end{proof}

\begin{proof}[Proof of Theorem \ref{thm:main}]
  Assume that $k\neq n-k$. Then by \eqref{krok:3}, \eqref{krok:9} stars and tops
  are definable and distinguishable in $\fixprojr_k$. Hence, we can define
  $\fixprojr_{k-1}$ in $\fixprojr_k$ by Lemma~\ref{lem:mainbis}\eqref{lem:mainbis:c}.
  A top of $\fixprojr_{k-1}$ has form 
  $\{ H\in\symsuby_{k-1}\colon U\in\starof(H) \}$ for a point $U$ of $\fixprojr_k$, 
  so these tops are determined by the points of $\fixprojr_k$. 
  Analogously, the stars of $\fixprojr_{k+1}$ are determined by the
  points of $\fixprojr_k$.
  Therefore, the stars and the tops of $\fixprojr_{k-1}$  (of
  $\fixprojr_{k+1}$, resp.) can be distinguished.
  Now, from Lemma~\ref{lem:mainbis}\eqref{lem:mainbis:c} by induction on $k$ we infer 
  that $\fixprojr_1$, which is $\fixprojr$ up to an isomorphism, can be defined 
  in $\fixprojr_k$.  In view of \eqref{krok:start} we are through.
\end{proof}

\begin{proof}[Proof of Corollary \ref{cor:main}]
  Let $F\in\Aut(\baba_k)$. In view of \eqref{krok:30} it suffices to show that
  $F$ is induced by a collineation of $\fixproj$ preserving $\perp$ and acting 
  on $\symsuby_k$.
  
  In case $k\neq n - k$ the proof runs by induction on $k$ via Lemma~\ref{lem:mainbis}\eqref{lem:mainbis:b}.
  So, assume that $k = m$.
  Note that if 
	${\cal X}',{\cal X}''\in\stars_k$, $|{\cal X}'\cap{\cal X}''|=1$,
  and 
	$F({\cal X}')\in\stars_k$ 
  then 
	$F({\cal X}'')\in\stars_k$ as well.
  Indeed, if there were 
	$F({\cal X}'')\in\tops_k$ 
  then 
	$|F({\cal X}')\cap F({\cal X}'')|=1$ 
  would contradict \eqref{krok:peki1} and \eqref{krok:peki2}.
  Suppose that 
	$F({\cal X})\in\stars_k$ for some ${\cal X}=\starof(H)\in\stars_k$.
  By \eqref{krok:con4}, for each $H'\in\symsuby_{k-1}$ there is a sequence
	$H =H_0\topadjac\dots\topadjac H_t = H'$ 
  and then
	$F(\starof(H'))\in\stars_k$, 
  so $F$ preserves $\stars_k$.
  As in the proof of Lemma~\ref{lem:mainbis}\eqref{lem:mainbis:b}, we consider $F^+$ and $F^-$ to justify that $F$ is determined by 
  a collineation of \fixprojr.
  If 
	$F({\cal X}_0)\notin\stars_k$ for some ${\cal X}_0\in\stars_k$ 
  then
	$F({\cal X})\notin\stars_k$ for all ${\cal X}\in\stars_k$, 
  and thus
	$F({\cal X})\in\tops_k$ for all ${\cal X}\in\stars_k$.
  Let 
	$G = F\circ\varkappa$; 
  then $G\in\Aut(\baba_k)$ preserves the 
  families $\stars_k$ and $\tops_k$ and thus it is determined by a collineation.
  This completes the proof.
\end{proof}

\begin{proof}[Proof of Theorem~\ref{thm:updown}]
  We have 
    $\topadjac\;=\;\badjac$ for $k=1$ and 
    $\botadjac\;=\;\badjac$ for $k=n-1$. 
  Consequently, for $k\in\{1,n-1\}$, we are done by Theorem \ref{thm:main}.
  Let 
    $2\ndzieli k$ and $1 < k < n-1$.  
  By \eqref{krok:21} and \eqref{krok:22}, $\stars_k$ and $\tops_k$ can
  be distinguished in terms of $\botadjac$; 
  applying the duality $\varkappa$ we see that the stars and the tops are distinguishable in 
  terms of $\topadjac$ as well.
  As above, from \eqref{krok:updown}
  we get that $\fixprojr_{k-1}$ can be defined both in terms of $\botadjac$ 
  and in terms of $\topadjac$ on $\symsuby_k$.
  Finally, 
  this observation together with Theorem~\ref{thm:main}
  and Corollary~\ref{cor:main} completes the proof.
\end{proof}

%
\section{Comments}

We do not pay special attention to geometry of the Grassmannians introduced in the 
paper. Some comments which can be derived immediately from facts established in
Sec.~\ref{sec:proofs} are in order, though.
\begin{sentencesx}
\item\label{geo:ax1}
  If $2 \dzieli k$ then $\PencSpace(\symsuby,k)$ is a connected 
  partial linear $\Gamma$-space, whose 
  strong (or linear in other words)
  subspaces are affine spaces. 
  This resembles an affine polar space.
  And indeed, there are connections.
  The Grassmannian of regular lines in a projective $3$-space endowed with a 
  symplectic polarity is an affine polar space.
  In general, however, the axiom (3.1.iii) of \cite{cohenshult} fails
  here (and only this one from the list (3.1) fails).
  Moreover, our Grassmannians have maximal strong subspaces of two 
  distinct dimensions allowed, while maximal strong subspaces of a polar space
  all have the same dimension.
  And here, each line is the intersection of exactly two maximal strong subspaces
  which contain it.
\item\label{geo:ax2}
  If we consider the Grassmannian $\PencSpace(\symsuby,k)$ with $2 \dzieli k$
  as a part of the whole incidence geometry \eqref{def:symsuby} then we see
  that a plane in a strong subspace $\cal X$ 
  is determined either by an element of $\regsuby_{k\pm 2}$
  or by $X\in\Sub_{k\pm 2}({\field V})$ with $\rdim(X) =2$.
  Clearly, there is no way to distinguish these two types within the affine space $\cal X$.
  Let us consider an ``{\em affine geometry}" of the form
  \begin{ctext}
    ${\goth A}  = \langle \text{points of }{\cal X},\text{lines of }{\cal X},\Pi \rangle$,
  \end{ctext}
  where $\Pi$ are the regular planes in the above meaning.
  For any $3$-subspace $\Gamma$ of $\goth A$ 
  which contains a plane in $\Pi$
  the ``dual" structure 
  \begin{ctext}
    $\bstruct{ \{\pi\in\Pi\colon \pi\subset \Gamma  \},
      \{ L\colon L\subset \Gamma,\; L\text{ is a line of } {\goth A} \}, \supset}$
  \end{ctext}
  is an affine $3$-space.
\item\label{geo:ax3}
  Let $2 \ndzieli k$. Then $\PencSpace(\symsuby,k)$ is a connected 
  partial linear space which satisfies the following variant of the $\Delta$-axiom
  (cf. \cite{copolar}, \cite{embcopol}):
  \begin{ctext}\em
    a point not on a line $L$ is collinear with \\
    none, exactly one, or all except one point on $L$.
  \end{ctext}
  Let $\cal M$ be the family of the maximal subspaces of $\PencSpace(\symsuby,k)$
  which are (up to an isomorphism) symplectic copolar spaces.
  The family $\cal M$ covers the point set of our Grassmannian in such a way that
  any two elements of $\cal M$ intersect in a point, in a line, or are disjoint, 
  each line has exactly two extensions to a subspace in $\cal M$,
  and each clique of collinearity is contained in an element of $\cal M$.
  Very nice characterizations of the geometry on elements of $\cal M$ can be found, e.g. in
  \cite{embcopol} and~\cite{hipsympl}.
\end{sentencesx}

\medskip
We conjecture that starting with the properties \eqref{geo:ax1}--\eqref{geo:ax3}
one can obtain characterizations of respective Grassmannians in the style of 
\cite{talin}.


\bigskip
\begin{small}
\noindent
Authors' address:
\\
Ma{\l}gorzata Pra{\.z}mowska,
Krzysztof Pra{\.z}mowski,
Mariusz {\.Z}ynel
\\
Institute of Mathematics, University of Bia{\l}ystok
\\
ul. Akademicka 2, 15-267 Bia{\l}ystok, Poland
\\
\verb+malgpraz@math.uwb.edu.pl+, \verb+krzypraz@math.uwb.edu.pl+,
\verb+mariusz@math.uwb.edu.pl+
\end{small}

\end{document}